\title{The Liouville property for groups acting on rooted trees}
\author{Gideon Amir\thanks{Bar-Ilan University,  gidi.amir@gmail.com}
  \and Omer Angel\thanks{University of British Columbia, angel@math.ubc.ca}
  \and Nicol\'as Matte Bon\thanks{Universit\'e Paris Sud,
    nicolas.matte.bon@ens.fr}
  \and B\'alint Vir\'ag\thanks{University of Toronto, balint@math.utoronto.ca}}
\date{October 2014}
\definecolor{darkblue}{rgb}{0,0,0.44} 
\renewcommand{\S}{{S}}
\newcommand{\R}{\mathbb{R}}
\newcommand{\G}{\mathcal{G}}
\newcommand{\M}{{M}}
\renewcommand{\H}{{H}}
\newcommand{\aut}{\operatorname{Aut}}
\newcommand{\supp}{\operatorname{supp}}
\newcommand{\T}{\mathbb{T}}
\newcommand{\mb}{{\overline{m}}}
\newcommand{\Tmb}{\T_\mb}
\newcommand{\al}{\mathbf}
\newcommand{\A}{{A}}
\newcommand{\Ag}{\mathcal{A}}
\newcommand{\B}{{B}}
\newcommand{\Abb}{\mathbb{A}}
\renewcommand{\Bbb}{\mathbb{B}}
\renewcommand{\O}{\mathbb{O}}
\newcommand{\E}{\mathbb{E}}
\newcommand{\N}{\mathbb{N}}
\newcommand{\bg}{\mathbf{g}}
\newcommand{\bv}{\mathbf{v}}
\newcommand{\W}{\mathbb{W}}
\renewcommand{\P}{\mathbb{P}}
\newcommand{\res}{\operatorname{Res}}
\newcommand{\Aut}{\operatorname{Aut}}
\newcommand{\Ubb}{\mathbb{U}}
  \crefname{theorem}{Theorem}{Theorems}
  \crefname{thm}{Theorem}{Theorems}
  \crefname{main thm}{Theorem}{Theorems}
  \crefname{lemma}{Lemma}{Lemmas}
  \crefname{claim}{Claim}{Claims}
  \crefname{remark}{Remark}{Remarks}
  \crefname{proposition}{Proposition}{Propositions}
  \crefname{defin}{Definition}{Definitions}
  \crefname{definition}{Definition}{Definitions}
  \crefname{corollary}{Corollary}{Corollaries}
  \crefname{section}{Section}{Sections}
  \crefname{figure}{Figure}{Figures}
\theoremstyle{definition}
  \newtheorem{defin}{Definition}[section]
\theoremstyle{plain}
  \newtheorem{thm}[defin]{Theorem}
  \newtheorem{main thm}{Theorem}
  \newtheorem{prop}[defin]{Proposition}
  \newtheorem{coro}[defin]{Corollary}
  \newtheorem{lemma}[defin]{Lemma}
  \newtheorem{claim}[defin]{Claim}
\theoremstyle{remark}
  \newtheorem{remark}[defin]{Remark}
\begin{document}

\maketitle

\begin{abstract}
  We show that on groups generated by bounded activity automata, every
  symmetric, finitely supported probability measure has the Liouville
  property.  More generally we show this for every group of automorphisms
  of bounded type of a rooted tree.  For automaton groups, we also give a
  uniform upper bound for the entropy of convolutions of every symmetric,
  finitely supported measure.
\end{abstract}

\section{Introduction}

Groups acting on rooted trees are a source of finitely generated groups
with a number of interesting properties concerning amenability, growth and
random walks.  An important special case are automata groups.  These
include the Grigorchuk group of intermediate growth \cite{Grigorchuk83},
Gupta and Sidki's examples of finitely generated torsion $p$-groups
\cite{Gupta-Sidki}, the Hanoi Tower groups \cite{Hanoi}, the Basilica group
\cite{Basilica} and iterated monodromy groups arising from holomorphic
dynamics: see \cite{Nek05, Nek11} for a survey of the topic and literature.

One aspect of groups acting on rooted trees that has attracted some attention is the
behaviour of random walks on them, especially random walks with the
\emph{Liouville property}. (For the definition and preliminaries on the
Liouville property see \cref{S:Liouville}.) On one hand, construction based on these groups have provided new examples of asymptotic behaviours for the rate of
escape and entropy of random walks in the sublinear range \cite{Brieu:entropy, AV:speedexponents}
and for the relationship between the Liouville property and growth of
groups \cite{Erschler:boundarybehavior, BE:poisson}.  On the other hand the
Liouville property has turned out to be a useful tool to prove
\emph{amenability} of several classes of groups acting on rooted trees
\cite{BV05, Kai:munchaussen, Brieu:nonuniformgrowth, BKN10,
  AAV:amenability}.  The motivation of this paper has its roots in these
results, that we now outline.

The first result of this kind is due to Bartholdi and Vir\'ag \cite{BV05},
who used random walks to prove amenability of the \emph{Basilica group}.  A
key observation was that the Basilica group admits a special, so-called
\emph{self-similar}, non-degenerate symmetric, finitely supported measure.
From this, they deduce amenability of the group.  Their method was later
generalized and simplified by Kaimanovich \cite{Kai:munchaussen} who gave a
general definition of a self-similar measure on a group acting on a rooted
tree, and showed that any such measure has the Liouville property.  In
particular a group supporting a non-degenerate self-similar measure is
amenable.  As noted in \cite{Kai:munchaussen}, existence of a finitely supported
self-similar measure is rare as it relies on strong combinatorial
assumptions on the action on the tree.

These ideas were further developed and used to show amenability of a large
class of automata groups, namely groups generated by \emph{finite automata
  of bounded activity} by Bartholdi, Kaimanovich and Nekrashevych \cite{BKN10}
and groups generated by \emph{finite automata of linear activity} in
\cite{AAV:amenability} (see \cref{S:automata} for definitions regarding
automata groups and their activity degree).  The idea of the proofs in
\cite{BKN10, AAV:amenability} is to embed all such groups in a special
family of groups called the \emph{mother groups}, and show that these admit
a special generating measure with the Liouville property.  Thus the mother groups
are amenable.  Since amenability is inherited by subgroups, so are all
groups generated by finite automata of bounded and linear activity.  The measure considered on the
mother groups also has a certain self-similarity, weaker than
the self-similarity required in \cite{Kai:munchaussen}.  These results have
been unified and are now part of a more general amenability criterion due
to Juschenko, Nekrashevych and de la Salle \cite{JNS:recurrentgrupoids},
which applies to a wider class of groups (and is not concerned with the
Liouville property).

The results and methods from \cite{BV05,Kai:munchaussen, BKN10,
  AAV:amenability} raise the following question: does the Liouville
property hold for \emph{all} symmetric, finitely supported measures on the
groups considered there?  Note that the results above do not even imply
that every bounded or linear automaton group \emph{admits} a symmetric,
finitely supported, generating probability measure with the Liouville property (see
\cref{S:Liouville} for an account on open questions concerning
the stability of the Liouville property).  A positive answer was conjectured
in \cite{AAV:amenability} for groups generated by finite automata of
bounded, linear and quadratic activity.  It is shown in
\cite{AV:positivespeed} that this does not hold in general for automata
groups of polynomial activity of degree at least 3, and it is not yet known
if these groups are amenable as asked by Sidki \cite{Si04}.

\subsection{Statement of results}

The aim of this paper is to to establish the Liouville
property for a large class of random walks on groups acting on rooted trees,
that may lack self-similar properties.  Our method combines ideas from
papers cited above, together with an analysis of the orbital Schreier
graphs for the group action on the rooted tree.  Our first result gives a
partial answer to the conjecture in \cite{AAV:amenability}, covering the
case of bounded automata groups.  In fact we do not need the assumption that
the groups are generated by a \emph{finite state} automaton: our result
applies to general groups of automorphisms of \emph{bounded type} (see
\cref{D: bounded automorphisms}) of a spherically homogeneous rooted tree.

\begin{main thm}\label{T:bounded Liouville}
  Let $G$ be a group of automorphisms of bounded type of a spherically
  homogeneous rooted tree of bounded valencies.  Then every symmetric,
  finitely supported probability measure $\mu$ on $G$ has the Liouville
  property.
\end{main thm}

Amenability of general groups of automorphisms of bounded type of a rooted
tree is a particular case of the result of Juschenko, Nekrashevych and de
la Salle \cite{JNS:recurrentgrupoids} which answers a question of
Nekrashevych \cite{Nekrashevych:freesubgroups}.  Since the Liouville property
implies amenability, \cref{T:bounded Liouville} also implies this result.

A key ingredient of the proof of \cref{T:bounded Liouville} is
\emph{recurrence} of the orbital Schreier graphs for the action on the
boundary of the rooted tree \cite{Bo07,JNS:recurrentgrupoids}.  More can be
said in cases where the Schreier graphs have explicit descriptions.  In
such cases a closer analysis of the Schreier graphs yields explicit upper
bounds for the entropy of the convolutions $H(\mu^{*k})$ (see
\cref{sec:entropy} for preliminaries regarding entropy and its relationship
to the Liouville property).

We illustrate this with the \emph{principal group of directed automorphism}
$M(A,B)$ (see \cref{mother group} for the definition).  These groups were first
defined and studied by Brieussel, who proved their amenability in
\cite{Brieu:nonuniformgrowth} using random walks and in
\cite{Brieu:folnersets} by exhibiting F{\o}lner sets.  These are
generalizations of the mother groups from \cite{BKN10}.  In particular they
contain as subgroups all groups generated by finite-state automata with
bounded activity (see \cref{embedding mother group} below).  A particular
case of the group $\M(A,B)$ was also used in \cite{Brieu:entropy,
  AV:speedexponents}.

\begin{main thm}\label{T:spherically homogeneous}
  Let $\M(\A,\B)$ be a group as in \cref{def:group of directed
    automorphisms} acting on a spherically homogeneous rooted tree $\Tmb$
  with bounded valencies $\bar{m}=(m_n)$.  Then every symmetric, finitely
  supported measure $\mu$ on $\M(\A,\B)$ has the Liouville property.
  Moreover there exists a constant $C$ depending on $\supp(\mu)$ only such
  that
  \[
  H(\mu^{*k}) \leq C k^\alpha.
  \]
  where $\alpha=\log m_*/\log\frac{m_*^2}{m_*-1}$ and $m_*=\max(\bar{m})$.
\end{main thm}

In \cite{BKN10} this bound was obtained in the case when $M(A, B)$ is the mother
group and $\mu$ is in a special class of measures defined there.  Note the
support of $\mu$ need not generate all of $M(A,B)$.  Since every group
generated by a finite automaton of bounded activity is a subgroup of some
group of the form $M(A,B)$, we  get the following corollary.

\begin{coro}
  Let $G$ be a group generated by a finite automaton of bounded activity,
  and $\mu$ a symmetric, finitely supported probability measure on $G$. Then
  $H(\mu^{*k}) \leq C k^\alpha$, where $\alpha<1$ depends only on the group
  $G$ and the constant $C$ depends only on the support of $\mu$.
\end{coro}

The exponent $\alpha$ can be explicitly determined from the structure of
the automaton by following the argument in \cite[Theorem 3.3]{BKN10} to embed $G$ in the mother group.
\bigskip

A comment on the linear and quadratic activity case in  the conjecture in \cite{AAV:amenability} seems in order. Can these cases be attacked using the method of this paper?
The ascension diagrams (see \cref{multi-ascension}) become more complicated. To analyse them effectively, a more precise understanding of simple random walk on the  Schreier graphs seems needed, beyond the fact that the infinite graphs are recurrent.  This task becomes harder together with the level of precision required, as the graphs also become more complicated. We believe that this can be done to prove the conjecture in the linear case. However this would require a considerably  more complicated analysis relying on quantitative resistance estimates. We do not know if there is any hope to apply our method to the quadratic case.

\subsection{Preliminaries on the Liouville property}
\label{S:Liouville}

Given a probability measure $\mu$ on a countable group $G$, a function
$f:G\rightarrow \R$ is said to be $\mu$-harmonic if $f(g)=\sum_{h\in
  G}f(gh)\mu(h)$ for every $g\in G$. The measure $\mu$ is said to have the
\emph{Liouville property} if every bounded $\mu$-harmonic function on $G$
is constant on the subgroup $H = \langle \operatorname{supp}(\mu) \rangle$.
An equivalent formulation of the Liouville property is triviality of the
\emph{Poisson boundary} of $(H,\mu)$ \cite{KV83}.  If moreover the measure
$\mu$ is symmetric and has finite first moment with respect to a word
metric, the Liouville property is equivalent to the random walk with step
measure $\mu$ having $0$ asymptotic speed \cite[Corollary 3]{KL07}.  Under
the weaker assumption that $\mu$ has finite entropy, the Liouville property
is equivalent to vanishing of the asymptotic entropy $h(\mu)$ (\cite{KV83,
  Dierrenic}).  The latter will be the characterisation of the Liouville
property that we use in this paper.  See \cref{sec:entropy} for
preliminaries regarding entropy.

Amenability of a countable group $G$ is equivalent to the existence of a
Liouville symmetric measure supported on a generating set of $G$
\cite{KV83, Rosenblatt}.  This measure may have infinite support, as in
the well-known case of the Lamplighter group over $\mathbb{Z}^3$, namely
$\mathbb{Z}/2\mathbb{Z}\wr\mathbb{Z}^3$, see\cite{KV83}.  In some amenable
groups it must even have infinite entropy, see \cite{Erschler:liouville}.
Thus existence of a \emph{finitely supported} Liouville symmetric measure
whose support generates the group is strictly stronger than amenability. The Liouville property depends on the choice of $\mu$; however it
is an important open question whether it is a group
property when one restricts to symmetric measures with finite
generating support, and whether it is inherited by subgroups for the same class of measures.

\subsection{Structure of the paper and overview of the proofs}

\cref{sec:trees} contains preliminaries on groups acting on rooted trees.

\cref{sec:RWIDF} contains general facts on random walks on groups acting on
rooted trees.  Most of this section is based on the connection between
groups acting on rooted trees and \emph{random walks with internal degrees
  of freedom}, introduced by Kaimanovich \cite{Kai:munchaussen}.  A random
walk with internal degrees of freedom on a group $G$ with space of degrees
$Y$ is a Markov chain on $G\times Y$ that can be described in terms of a
random walk on a \emph{diagram}: a finite graph with vertex set $Y$ where
edges are labelled by probability measures on $G$.  We revisit and
slightly generalise ideas in \cite{Kai:munchaussen} by considering the
\emph{ascension diagram} (see \cref{multi-ascension}), a smaller diagram
obtained by stopping the walker on the Schreier graph when it visits a
fixed subset of $Y$ (rather than a single vertex as in previous works).  We
then prove an inequality linking the asymptotic entropy of the random walk
on the group with the ascension diagrams.  In \cite{Kai:munchaussen,
  BKN10,AAV:amenability}, random walk with internal degrees of freedom
arising from self-similar random walks were used, via explicit calculations
using matrices with entries in the group algebra.  For random walks lacking
of self-similarity properties, these calculations become more complicated.
To avoid these we take advantage of recurrence of the Schreier graphs through a simple fact proven at the end of the section.

In \cref{sec:proof thm 1} we prove \cref{T:bounded Liouville}. The proof is
based on the tools introduced in \cref{sec:RWIDF}. A key observation is
that \emph{sections} of elements in the support of $\mu$ at high enough
levels of the tree belong either to a finite group of \emph{finitary
  automorphisms} or to a finite
\emph{groupoid of directed automorphisms} (a notion introduced in
\cref{sec:trees}).  Combined with recurrence of the orbital Schreier
graphs, this yields bounds on the asymptotic speed of random walks with internal degrees of freedom determined by
the ascension diagrams, which are used to bound the entropy of the
original random walk.

Finally, in \cref{sec:proof thm 2} we prove \cref{T:spherically
  homogeneous}.  The additional ingredient needed is a analysis of the
orbital Schreier graphs for the action on the finite level of the tree
using electric network theory.  We give lower bounds on effective
resistances between certain points in the graph, and use them to get
explicit entropy estimates through arguments similar to \cref{sec:proof thm
  1}.

\paragraph*{Acknowledgements.}
We thank Mikael de la Salle for pointing out an imprecision in the
statement of \cref{P:entropy} in a previous version.  GA's research was
supported by the Israel Science Foundation (grant No. 1471) and by a Grant
from the GIF, the German-Israeli Foundation for Scientific Research and
Development.  OA was partially supported by NSERC and ENS in Paris. NMB was
introduced to this subject by Anna Erschler, and thanks her for several
conversations. The work of NMB was partially supported by the ERC staring grant GA 257110 ``RaWG''.

\section{Rooted trees and their automorphisms}
\label{sec:trees}

\subsection{Spherically homogeneous rooted trees and their automorphisms}

Let $\mb = (m_i)_{i\geq 1}$ be a bounded sequence of positive integers.
The \emph{spherically homogeneous rooted tree} $\Tmb$ is the tree where
each vertex at level $k$ has $m_{k+1}$ children in level $k+1$.  The tree
$\T_\mb$ has a root in level $0$, which is denoted $\varnothing$.  A vertex
at level $k$ is naturally encoded by a word $x_k x_{k-1}\dots x_1$, where
$x_i \in X_{m_i} = \{0,\dots,m_i-1\}$. The children of $v$ are words of the
form $xv$ where $x$ is a single letter.  We denote by $\Tmb^n \subset \Tmb$
the set of words of length $n$, i.e.\ the $n$th level of the tree.  Note
that words are read from right to left.

We denote by $\Aut(\Tmb)$ the group of automorphisms of $\Tmb$ that fix the
root.  Note that for some sequences $\mb$ (in particular the constant
sequences) all automorphisms of $\Tmb$ fix the root.  However, there are
sequences for which the tree has additional automorphisms which do not fix
the root and so do not belong to $\Aut(\Tmb)$ in our notations.  We write
actions of automorphisms on the right and use the notation
\[
w \mapsto w\cdot g
\]
for $w\in\Tmb$ and $g\in\Aut(\Tmb)$.
For $w\in\Tmb$, consider the sub-tree rooted at $w$.  If $w$ is at level
$n$, then this sub-tree is isomorphic to the spherically homogeneous rooted
tree $\T_{\sigma^n\mb}$, where $\sigma$ denotes the shift operator
\[
\sigma(m_1,m_2,\dots) = (m_2,m_3,\dots).
\]
Automorphisms $g\in\Aut(\Tmb)$ preserve the levels of the tree, so that
every word $w\in\Tmb$ is mapped by $g$ to a word $w\cdot g$ of the same
length, say $n$.  Since the sub-trees above $w$ and $w\cdot g$ are
canonically isomorphic, $g$ induces a bijection of the sub-trees rooted at
$w$ and $w\cdot g$, which can be identified with a unique element of
$\Aut(\T_{\sigma^n\mb})$.  This element is called the \emph{section} of $g$
at $w$ and it is denoted $g|_w$.  Formally, the section is the unique
element $g|_w \in \Aut(\T_{\sigma^n\mb})$ such that for every word
$v\in\T_{\mb}^n$,
\[
vw\cdot g = (v\cdot g|_w)(w\cdot g),
\]
where the parenthesis juxtaposition denotes concatenation of words.  It
immediately follows from the definition that sections are multiplied and
inverted according to the following rules
\begin{equation}\label{sections multiplication rule}
  (gh)|_w = g|_w h|_{w\cdot g};
  \qquad
  g^{-1}|_w=(g|_{w\cdot g^{-1}})^{-1}.
\end{equation}
Using an equivalent terminology, there is an isomorphism (a \emph{wreath
  recursion})
\[
\begin{array}{cccc}
  \Aut(\Tmb)&\to&\Aut(\T_{\sigma\mb}) \wr_{X_{m_1}}\S_{m_1}
    &= \Aut(\T_{\sigma\mb})^{X_{m_1}}\rtimes\S_{m_1},\\
  g&\mapsto& &(g|_0,\dots,g|_{m_1-1})\sigma.
\end{array}
\]
where $g|_0,\dots,g|_{m_1-1}$ are the first level sections of $g$ and the
permutation $\sigma$ gives its action on the first level $\Tmb^1 =
X_{m_1}$.

\begin{defin}\label{def:group of sections}
  Let $G < \aut(\Tmb)$.  For $n\in\N$ we denote by $G^{(n)}$ the group of
  {\em $n$th level sections} of $G$, i.e. the subgroup of
  $\Aut(\T_{\sigma^n\mb})$ generated by $\left\{ g|_w : g\in G,
    w\in\Tmb^n\right\}$.
\end{defin}

\begin{remark}\label{R: sections generators}
  If the group $G$ is generated by the set $S$, the groups of sections
  $G^{(n)}$ are generated by the $n$th level sections of elements in $S$,
  see \eqref{sections multiplication rule}.
\end{remark}

The action of $\Aut(\Tmb)$ naturally extends to an action by homeomorphism
on the \emph{boundary at infinity} of the tree $\partial \Tmb$.  The
boundary $\partial \Tmb$ is the set of infinite geodesic rays starting from
the root. In our notations it identifies with the set of left-infinite
sequences $\gamma=\cdots x_3x_2x_1$ where $x_i\in X_{m_i}$. The set
$\partial \Tmb$ is endowed with the natural product topology, which makes
it homeomorphic to the Cantor set.

\medskip

The {\em Schreier graph} associated with a group action is defined as
follows. If a group $G$ generated by a finite symmetric set $S$ acts on a
set $Y$, the Schreier graph has vertex set $Y$ and edges $(y,y\cdot s)$
for $y\in Y, s\in S$.  We admit that the action of $G$ on $Y$ can be non-transitive
and then the Schreier graph is disconnected.  A connected component of the
Schreier graph is called an {\em orbital} Schreier graph.

In our setting, a finitely
generated subgroup $G<\Aut(\T^d)$ naturally defines a sequence of finite
Schreier graphs arising from the action on the finite levels of the
tree.  It also defines a family of infinite graphs given by the orbital
Schreier graphs for the action of $G$ on $\partial \Tmb$.  The Schreier
graph for level $n+1$ covers the graph for level $n$.

\subsection{Activity and automorphisms of bounded type}
\label{S: activity}

The {\em activity function} of an automorphism $g\in\Aut(\Tmb)$ is the
function $\Gamma_g:\N\to\N$ that counts the number of level $n$ vertices
$v$ so that $g|_v\neq e$.  By \eqref{sections multiplication rule} the
activity satisfies
\[
\Gamma_{gh}(n)\leq \Gamma_g(n)+\Gamma_h(n); \qquad
\Gamma_{g^{-1}}(n)=\Gamma_g(n).
\]

This allows to define several subgroups of $\Aut(\Tmb)$ in terms of the
activity function.  For instance elements whose activity function is
bounded (respectively grows at most polynomially, respectively grows
subexponentially) form a subgroup of $\Aut(\Tmb)$.

\begin{defin}
  An element $g\in\Aut(\Tmb)$ is called \emph{finitary} if the sections
  $g|_v$ are non-trivial only for finitely many vertices $v\in \Tmb$. We
  define the \emph{depth} of $g$ to be the smallest level $n$ so that all
  sections at level $n$ are trivial.
\end{defin}

Finitary automorphisms of $\Tmb$ form a locally finite subgroup of
$\Aut(\Tmb)$.

Automorphisms of \emph{bounded type} are automorphims that
have bounded activity in a strong sense, that we now define.

\begin{defin}[Automorphism of bounded type]\label{D: bounded
    automorphisms}\label{D: singular rays}
  An automorphism $g\in\Aut(\Tmb)$ is said to be of \emph{bounded type} if
  there exists a finite set of rays in $\partial \Tmb$, called the
  \emph{singular rays} of $g$, and a $K>0$ so that $g|_v$ is finitary with
  depth at most $K$ whenever $v$ does not belong to a singular ray. The
  minimal such $K$ is called the \emph{depth} of $g$.
\end{defin}

In other word automorphisms of bounded type are those that have non-trivial
sections only in a bounded neighbourhood of a finite set of rays. Obviously
automorphisms of bounded type have bounded activity. In some special cases
the two notions coincide (for instance for automorphisms defined by a
finite-state automaton, see \cref{S:automata}).

\begin{remark}
  It is easy to see from \eqref{sections multiplication rule} that
  automorphisms of bounded type form a subgroup of
  $\operatorname{Aut}(\Tmb)$.
\end{remark}

\subsection{The groupoid of directed automorphisms of a rooted tree}
\label{S:groupoid}

A special case of automorphism of bounded type are the \emph{directed
  automorphisms}.

\begin{defin}
  An automorphism of bounded type $g\in \Aut(\Tmb)$ is said to be {\em
    directed} if it has at most one singular ray $\gamma\in\partial\Tmb$.
  If there is such a ray, we say $g$ is {\em directed along $\gamma$}.  By
  convention we say that a finitary $g$ is directed along every ray.
\end{defin}

Unlike automorphisms of bounded type, directed automorphisms do not form a
subgroup of $\Aut(\Tmb)$. However we have the following properties (cf.\
the section multiplication rule \eqref{sections multiplication rule}):
\begin{enumerate}
\item if $g,h $ are directed along $\gamma, \eta$ respectively and
  $\gamma\cdot g =\eta$ then $gh$ is directed along $\gamma$;
\item if $g$ is directed along $\gamma$ then $g^{-1}$ is directed along
  $\gamma\cdot g$.
\end{enumerate}

These properties suggest that the set of directed automorphisms of $\Tmb$
form essentially a \emph{groupoid} (up to some ambiguity originated by
finitary automorphisms).  We shall now make this intuition precise.

Recall that any right action of a group $G$ on a set $X$ defines a
groupoid, called the \emph{action groupoid} and denoted
$\mathcal{G}=\mathcal{G}(X, G)$. By definition $\mathcal{G}=X\times G$ as a
set; the product of two elements $(x, g)$ and $(y, h)$ in $\mathcal{G}$ is
defined whenever $x\cdot g=y$ and in this case $(x, g)(y, h)=(x, gh)$; the
inverse of $(x, g)$ is $(x\cdot g,\: g^{-1})$. Elements of the form $(x,
e)\in\mathcal{G}$ are called \emph{units}.  A \emph{subgroupoid} of
$\mathcal{G}$ is a subset $\mathcal{H}\subset \mathcal{G}$ which is closed
under taking inverses and products (i.e.\ whenever the product of two
elements in $\mathcal{H}$ is defined in $\mathcal{G}$, it belongs to
$\mathcal{H}$) and that contains all units $(y, e)$ for $y\in Y$, where $Y
\subset X$ is the projection of $\mathcal{H}$ to $X$ (note that this is
allowed to be a proper subset of $X$). The subgroupoid generated by a
family $\mathcal{S}\subset \mathcal{G}$ is the smallest subgroupoid
containing $\mathcal{S}$.

\newcommand{\ax}{\operatorname{o}}
\newcommand{\tg}{\operatorname{t}}
\newcommand{\Dmb}{\mathcal{D}_{\bar{m}}}

The \emph{groupoid of directed automorphism} of the rooted tree $\Tmb$ is
the subgroupoid $\Dmb$ of the action groupoid $\mathcal{G}(\partial
\Tmb, \Aut(\Tmb))$ which consists of couples $(\gamma, g)\in \partial
\Tmb\times \Aut (\Tmb)$ such that $g$ is directed along $\gamma$.
   We define the $\emph{depth}$ of $(\gamma, g)\in \Dmb$ to be the depth of $g$.
There is a natural projection
\begin{align*}
  \Dmb & \to \Aut(\Tmb)\\
  (\gamma, g)& \mapsto g,
\end{align*}
which maps the groupoid product, whenever defined, to the usual group product.

Any \emph{non-finitary} directed automorphism $g\in \Aut(\Tmb)$ has a
unique pre-image in $\Dmb$. This allows to think of $g$ either as an
element of $\Aut(\Tmb)$ or as an element of $\Dmb$. Finitary automorphisms
however have several pre-images and thus the groupoid $\Dmb$ cannot be
properly identified with a subset of $\Aut(\Tmb)$.

\begin{lemma}\label{L: locally finite groupoid}
  The groupoid $\Dmb$ is locally finite: every finite family
  $\mathcal{S}\subset \Dmb$ generates a finite subgroupoid. Moreover the
  cardinality of this finite subgroupoid has an upper bound which depends
  only on the cardinality of $\mathcal{S}$, on the maximal depth of
  elements in $\mathcal{S}$, and on $m_*$.
\end{lemma}

\begin{proof}
  It is a classical and elementary fact that the unrestricted infinite
  direct product of finite groups of bounded size is locally finite, and
  the cardinality of the subgroup generated by a finite subset has an upper
  bound that only depends on the size of the subset.

  Given a ray $\gamma=\cdots x_3x_2x_1\in \partial \Tmb$ define the element
  $h_\gamma\in\Aut(\Tmb)$ by the wreath recursion
  \[h_\gamma=(h_{\sigma\gamma}, e,\cdots, e)(0x_1),\] where
  $\sigma\gamma=\cdots x_3x_2$. This element is directed along the zero ray
  $\rho=\cdots 000$, moreover $\rho\cdot h_\gamma=\gamma$. It is straightforward to check
  that if $g$ is directed along $\gamma$ with depth at most $K$,
  then $h_\gamma g h^{-1}_{\gamma\cdot g}$ is directed along $\rho$, fixes
  $\rho$ and has depth at most $K$. The set of automorphisms of
  $\Aut(\Tmb)$ with these properties is isomorphic to an infinite direct
  product of finite groups with bounded cardinalities.

  It follows that if $\mathcal{S}\subset \Dmb$ is finite and consist of
  elements with depth at most $K$, elements of the form $(\rho,
  h_\gamma)(\gamma, s)(\gamma\cdot s, h_{\gamma\cdot s}^{-1})$ where
  $(\gamma, s)\in \mathcal{S}$ belong to an infinite direct product of
  finite groups of bounded cardinality. Let $\mathcal{H}\subset \Dmb$ be
  the subgroupoid that they generate, which is in fact a group. The cardinality of $\mathcal{H}$ has an
  upper bound that depends only on $|\mathcal{S}|$ and $K$. Now observe
  that the subgroupoid generated by $\mathcal{S}$ is contained in $\Cup_{\gamma, \eta}(\gamma,
  h_{\gamma}^{-1})\mathcal{H}(\rho, h^{-1}_{\eta})$ where $\gamma,\eta$ run
  along singular rays of elements in $\mathcal{S}\cup\mathcal{S}^{-1}$. The
  conclusion follows.
\end{proof}

From now on we adopt the following notation: calligraphic letters
(e.g. $\mathcal{S}$) will always denote subsets (or subgroupoids) of the
groupoid $\Dmb$, and we will sometimes denote with the corresponding
capital letter (e.g. $S$) the projection to the group $\Aut(\Tmb)$.

\subsection{Automata groups and their activity degree}
\label{S:automata}

We now recall some basic notions in the relevant particular case of
\emph{automata groups} acting on regular rooted trees. These notions do not play an
active role in the proofs, but this is the most relevant source of examples.

If the sequence $\mb$ is constant, equal to some positive integer $m$, the
tree $\T_m$ is called the \emph{regular rooted tree} of degree $m$. It is
indexed by the set of words in the alphabet $X=\{0,\dots,m-1\}$.

An important class of finitely generated groups acting on the tree $\T_m$
are groups generated by \emph{finite automata}.  An \emph{invertible
  automaton} over the alphabet $X$ is a set $A$ (the automaton state space)
together with a pair of maps
\begin{align*}
  A &\rightarrow \S_m   &  A\times X &\rightarrow A,\\
  a &\mapsto \sigma_a   &      (a,x) &\mapsto a_x.
\end{align*}
Such an automaton acts on words in the alphabet $X$ as follows: if the
current state is $a\in A$, and the automaton receives as input a letter $x$
it outputs the letter $x\cdot\sigma_a$, and switches to state $a_x$. Given
an initial state $a\in A$, any word input into the automaton yields an
output of equal length, and it is readily seen that for any initial state
this action defines an automorphism of $\T_m$.  This automorphism is as
follows: $a$ acts on the first level by the permutation $\sigma_a$; its
first level section at vertex $x$ is the automorphism defined by the state
$a_x\in A$.  If a state defines the identity automorphism $e$ of $\T_m$, it
is said to be \emph{trivial}.

Every automaton $A$ generates a subgroup of $\Aut(\T_m)$, generated by the
automorphisms corresponding to all states.  An equivalent description is
that we have a finite set $A\subset\Aut(\T_m)$, so that for any $g\in A$
and any $v\in \T_m$ we have $g|_v \in A$. Such a set naturally defines an
automaton.

We shall always suppose that automata are \emph{reduced}, i.e.\ two
distinct states of $A$ define distinct automorphisms of the tree.  Any
automaton can be brought to a reduced form by identifying states with the
same action on the tree.

If $a$ is a state of an automaton $A$, the \emph{activity function}
$\Gamma_a(n)$ (see \cref{S: activity}) is determined in a simple manner by
the structure of the automaton as we shall now explain.  First note that in
the automaton case $\Gamma_a(n)$ grows either polynomially with some
integer exponent $d_a$ or exponentially (in which case $d_a$ is set to be
$+\infty$).  (This is since these functions satisfy a linear recursion
among themselves, and since $A$ is finite.)  The \emph{activity degree} of
$A$ is defined to be $d=\max_{a\in A} d_a$.  This invariant was introduced
by Sidki in \cite{Si04}. When $d=0$ the automaton is said to be of
\emph{bounded activity}.  Some well-studied examples of groups acting on
rooted trees belong to the class of bounded activity automata groups,
including the Grigorchuk group, the Basilica group and iterated monodromy
groups of postcritically finite polynomials (see \cite{Nek05}).

An automaton gives rise to a directed graph, possibly with loops and
multiple edges, called the \emph{Moore diagram} of the automaton.  The
vertex set is $A$, and there is an oriented edge from $a$ to $a_x$ for
every $x\in X$.  This directed edge is labelled by $(x,\, x\cdot\sigma_a)$.
The trivial state is a sink.  For clarity, the loops based at the trivial
state are usually omitted from the Moore diagram.  See \cref{fig:hanoi} for
an example.


The activity degree can easily be computed by looking at the structure of
the Moore diagram.  A \emph{non-trivial simple cycle} (henceforth, just
cycle) in the diagram is a closed oriented path visiting each vertex at
most once which visits states other than the trivial state.  Note that a
path is its set of edges, so that it is possible for two distinct cycles to
visit the same vertices, and even in the same order.  The activity is
exponential ($d=+\infty$) if and only if some strongly connected component
of the Moore diagram contains more than one cycle (in particular
$d=+\infty$ whenever two distinct cycles intersect).  If this is not the
case, then there is a partial order on the set of cycles: say that $c\to
c'$ if there is an oriented path from some state in $c$ to some state in
$c'$.  The activity degree is then equal to the largest $d\geq 0$ for which
there are distinct cycles with $c_d \to c_{d-1} \dots \to c_0$.

It easily follows from this description that an automaton of bounded
activity generates a group of automorphisms of bounded type in the sense of
\cref{D: bounded automorphisms}.

\medskip

Another diagram associated to an automaton is the \emph{dual Moore
  Diagram}, which is a special case of a Schreier graph.  This is the
oriented graph that has the alphabet $X$ as vertex set, and for every $x\in
X$ and $a\in A$ there is an oriented edge going from $x$ to
$x\cdot\sigma_a$.  Such an edge is labelled by $(a,a_x)$.

The $n$th iteration of the dual Moore diagram is defined to be the oriented
graph that has as a vertex set the $n$th level of the tree $\T_m^n$ and for
every word $w\in\T_m^n$ and every state $a\in A$ there is an edge going
from $w$ to $w\cdot a$.  Such an edge is labelled by $(a,\, a|_w)$.  The
$n$th iteration of the dual Moore diagram is thus isomorphic as a graph to
the Schreier graphs of $G = \langle A\rangle$ acting on $\T_m^n$ with
generating set $A$.

\section{Tools for random walks on groups acting on rooted trees}
\label{sec:RWIDF}

\subsection{Random walk with internal degrees of freedom}
\label{S: general RWIDF}

Let $G$ be a group and $Y$ be a finite set.  Consider a Markov chain with
state space $Y$ and transition probabilities given by a stochastic matrix
\[
P=(p_{xy})_{x,y\in Y}
\]
We shall always suppose that this Markov chain is irreducible.  Consider
also a collection of probability measures on $G$, denoted $\mu_{xy}$ for
$x,y\in Y$.  These are called {\em edge measures}, and we denote the
collection by $M = (\mu_{xy})_{x,y\in Y}$.  Only measures $\mu_{xy}$ for
pairs with $p_{xy}\neq 0$ are used.

Note that our notation are different from those in \cite{Kai:munchaussen}
in that there $M=(\mu_{xy})$ denotes a matrix of \emph{sub}-probability
measures with total mass $p_{xy}$ and that equal our $\mu_{xy}$ only after
renormalization.

Given such a pair $(M,P)$ we draw the following \emph{diagram}: take the
(oriented) graph with vertex set $Y$ induced by stochastic matrix $P$ (with
an edge $(x,y)$ whenever $p_{xy}\neq 0$).  Label the edge $(x,y)$ by the
pair $(\mu_{xy},p_{xy})$.  We will hereinafter make no distinction between
$(M,P)$ and the associated diagram.

\begin{defin}
  The {\em random walk with internal degrees of freedom} corresponding to
  $(M,P)$ is the Markov chain $(\al{g}_k,\al{y}_k)$ on $G\times Y$, defined
  as follows: $\al{y}_k$ performs a random walk on $Y$ with transition
  probabilities given by $P$.  When $\al{y}_k$ crosses a given edge, the
  group element $\al{g}_k$ is multiplied on the right by a sample of the
  corresponding edge measure.  Formally, the transition probability from
  $(h,x)$ to $(g,y)$ is $p_{xy}\mu_{xy}(h^{-1}g)$.
\end{defin}


Recall that for a Markov chain with state space $S$ and $T\subset S$, the
induced Markov chain on $T$ has transition probabilities $p_{xy} =
\P_x(X_\tau=y)$ where $\tau = \inf\{n>0 : X_n\in T\}$.

\begin{defin}[Trace]\label{def:trace RWIDF}
  Let $(\al{g}_k,\al{y}_k)$ be a random walk with internal degrees of freedom
  on $G\times Y$ with diagram given by $(M,P)$.  For a non-empty  $W\subset
  Y$, the induced Markov chain on $G\times W$ is called the {\em trace over
    $W$} of the original random walk with internal degrees of freedom.
\end{defin}

It is easy to see that the trace of a random walk with internal degrees of
freedom is also a random walk with internal degrees of freedom $(M_W,P_W)$.
In general, the measures making up $M_W$ can be much more complex than the
measures in $M$. For example, they may have infinite support even if
measures of $M$ have finite support. However, the walks we study below are
such that we retain some control over the support of the new edge measures.

Note that the diagram $(M_W,P_W)$ of the trace does not depend on the
initial distribution of $(\al{g}_0,\al{y}_0)$.  Hence taking the trace
might be seen as an operation on diagrams. The diagram $(M_W,P_W)$ can be
explicitly computed from the diagram $(M,P)$ and formulae
can be given in term of matrices with entries in the group algebra
$\ell^1(G)$, as shown in \cite{Kai:munchaussen}.

\subsection{Entropy and speed of random walks with internal degrees of
  freedom}
\label{sec:entropy}

Let $\nu$ be a probability measure on a countable space $E$.  Recall that
its \emph{entropy} is the quantity
\[
H(\nu) = -\sum_{\nu(e)>0} \nu(e) \log \nu(e). 
\]
For a random variable $\al{X}$ taking values in a countable space, the
entropy $H(\al{X})$ is defined as the entropy of its distribution. Let us
recall some basic properties of entropy.

\begin{prop}\label{P:entropy}
  \begin{enumerate}
  \item If $\al{X}$ has finite support, then $H(\al{X}) \leq \log
    |\supp(\al{X})|$, and equality holds if and only if $\al{X}$ is
    uniformly distributed on $\supp(\al{X})$.
  \item Let $\al{Y},\al{X}_1,\dots,\al{X}_n$ be discrete random variables
    defined on the same probability space, and suppose that $\al{Y}$ is a
    function of $\al{X}_1,\dots,\al{X}_n$.  Then
    \[
    H(\al{Y}) \leq H(\al{X_1},\dots,\al{X}_n)
    \leq H(\al{X}_1) + \dots + H(\al{X}_n),
    \]
    where the middle term denotes the entropy of the joint distribution of
    $(\al{X}_1,\dots,\al{X}_n)$.
  \item Let $G$ be a group generated by a finite set $S$ with the shortest
    word metric $|\cdot|$.  There exists a constant $C$, depending only on
    $|S|$, such that if $\al{g}$ is a random variable taking values in $G$,
    then
    \[
    H(\al{g})\leq C\E |\al{g}|+C.
    \]
  \end{enumerate}
\end{prop}


Let $\mu$ be a probability measure on a group $G$, and $(\al{g}_k)_k$ be
the corresponding random walk.  By (2) above and sub-additivity, the
following limit exists:
\[
h(\mu) = \lim_{k\to\infty} \frac1k H(\mu^{*k})
= \lim_{k\to\infty} \frac1k H(\al{g}_k).
\]
The limit is called the \emph{asymptotic entropy} of $\mu$. The asymptotic
entropy is related to the Liouville property by the following fundamental
result:

\begin{thm}[\cite{KV83, Dierrenic}]
  Let $\mu$ have $H(\mu)<\infty$. Then $h(\mu)=0$ if and only if $(G,\mu)$
  has the Liouville property.
\end{thm}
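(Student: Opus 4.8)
The plan is to route both implications through an intermediate property: triviality of the \emph{tail $\sigma$-algebra} of the random walk. Work on path space with i.i.d.\ increments $x_1,x_2,\dots$ of law $\mu$, put $g_0=e$, $g_k=x_1\cdots x_k$, and let $\mathcal{T}=\bigcap_n\sigma(g_n,g_{n+1},\dots)$. The first step is the classical dictionary: $(G,\mu)$ is Liouville if and only if $\mathcal{T}$ is $\P$-trivial on the subgroup $H=\langle\supp\mu\rangle$. For the easy direction, given a bounded $\mu$-harmonic $f$, the sequences $f(gg_n)$ are bounded martingales, whose a.s.\ limits are $\sigma(g_n,g_{n+1},\dots)$-measurable for every starting time and hence $\mathcal{T}$-measurable (left-translating by a fixed group element does not change the tail $\sigma$-algebra on path space); when $\mathcal{T}$ is trivial a standard argument using the Markov property then forces $f$ to be constant on $H$. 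Conversely, if every bounded harmonic function is constant, then for a tail event $A$ the function $g\mapsto\P_g(A)$ is bounded harmonic, so the L\'evy $0$--$1$ law together with the Markov property gives $\mathbf{1}_A=\P(A)$ a.s.

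The heart of the matter is the entropy identity
\[
H(x_1,\dots,x_m\mid\mathcal{T}) \;=\; m\bigl(H(\mu)-h(\mu)\bigr)\qquad\text{for every }m\ge 1.
\]
To prove it I would argue along two lines that meet at this formula. On one side, for $n\ge m$ the future $(g_{n+1},g_{n+2},\dots)$ is conditionally independent of $(x_1,\dots,x_m)$ given $g_n$ (Markov property), hence $H(x_1,\dots,x_m\mid g_n)=H(x_1,\dots,x_m\mid\sigma(g_n,g_{n+1},\dots))$; since $\sigma(g_n,g_{n+1},\dots)\downarrow\mathcal{T}$ and $H(x_1,\dots,x_m)\le mH(\mu)<\infty$, the martingale convergence theorem for conditional entropies yields $H(x_1,\dots,x_m\mid g_n)\to H(x_1,\dots,x_m\mid\mathcal{T})$. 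On the other side, a direct computation using $I(x_1,\dots,x_m;g_n)=H(g_n)-H(g_n\mid x_1,\dots,x_m)=H(g_n)-H(g_{n-m})$ (conditioning on the first $m$ increments turns $g_n$ into a translate of $g_{n-m}$) gives $H(x_1,\dots,x_m\mid g_n)=mH(\mu)-\bigl(H(g_n)-H(g_{n-m})\bigr)$. Writing $a_j:=H(g_{j+1})-H(g_j)$, one checks $(a_j)$ is non-increasing via $H(x_{j+1}\mid g_{j+1})\le H(x_{j+1}\mid g_{j+1},x_1)$ and stationarity, so it converges to its infimum, which equals $\lim_k\frac1k H(g_k)=h(\mu)$ because the Ces\`aro averages $\frac1n\sum_{j<n}a_j=\frac1n H(g_n)$ do. Therefore $H(g_n)-H(g_{n-m})=\sum_{j=n-m}^{n-1}a_j\to mh(\mu)$, and the displayed identity follows.

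The last step is to read off the theorem. If $h(\mu)=0$, the identity gives $H(x_1,\dots,x_m\mid\mathcal{T})=H(x_1,\dots,x_m)$ for all $m$, so the whole increment sequence is independent of $\mathcal{T}$; since $\mathcal{T}\subseteq\sigma(x_1,x_2,\dots)$, it is independent of itself and hence trivial, so $(G,\mu)$ is Liouville. Conversely, if $\mathcal{T}$ is trivial then $H(x_1,\dots,x_m\mid\mathcal{T})=mH(\mu)$, which by the identity forces $h(\mu)=0$.

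I expect the main obstacle to be the second step, and inside it two analytic points that deserve care: first, justifying the convergence of the conditional entropies $H(x_1,\dots,x_m\mid\sigma(g_n,g_{n+1},\dots))$ to $H(x_1,\dots,x_m\mid\mathcal{T})$ — this is exactly where the hypothesis $H(\mu)<\infty$ is used and cannot be dropped — together with the clean identification of $H(x_1,\dots,x_m\mid g_n)$ through the Markov property; and second, the monotonicity of $(a_j)$, a short but essential fact without which Ces\`aro convergence would not pin down $\lim a_j$. Everything else (the Liouville--tail dictionary, the basic entropy manipulations) is routine given the preliminaries recalled above.
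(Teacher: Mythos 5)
The paper does not actually prove this statement---it is imported from \cite{KV83, Dierrenic} as a black box---so there is no internal proof to compare against. What you have written is the standard Kaimanovich--Vershik entropy argument, and its core is correct: the identification $H(x_1,\dots,x_m\mid g_n)=mH(\mu)-\bigl(H(g_n)-H(g_{n-m})\bigr)$, the reverse-martingale convergence of conditional entropies to $H(\cdot\mid\mathcal{T})$ (where finiteness of $H(\mu)$ is indeed essential), and the monotonicity of $a_j=H(g_{j+1})-H(g_j)$ all go through. (One cosmetic slip: your monotonicity inequality is written backwards; the correct chain is $H(x_{j+1}\mid g_{j+1})\ge H(x_{j+1}\mid g_{j+1},x_1)=H(x_j\mid g_j)$, equivalently $a_j=I(x_1;g_{j+1})$ is non-increasing by data processing along $x_1\to g_j\to g_{j+1}$.) Together with the easy half of the dictionary this correctly yields $h(\mu)=0\Rightarrow\mathcal{T}$ trivial $\Rightarrow$ Liouville.

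The genuine gap is in the converse direction, at the step ``Liouville $\Rightarrow\mathcal{T}$ trivial.'' For a tail event $A$ the function $g\mapsto\P_g(A)$ is \emph{not} harmonic in general: the one-step decomposition gives $\P_g(A)=\sum_h\mu(h)\,\P_{gh}(\theta A)$ with $\theta$ the shift on path space, and $\theta A=A$ only for shift-invariant events. So the Liouville property directly yields triviality of the \emph{invariant} $\sigma$-algebra $\mathcal{I}$, whereas your entropy identity is proved for the \emph{tail} $\sigma$-algebra $\mathcal{T}\supseteq\mathcal{I}$, and triviality of the smaller $\sigma$-algebra does not formally imply triviality of the larger one. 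Nor can one interpolate through the identity: $\mathcal{I}$ trivial gives $H(x_1,\dots,x_m\mid\mathcal{I})=mH(\mu)$, but conditioning on the larger $\mathcal{T}$ only \emph{decreases} entropy, so this puts no lower bound on $H(x_1,\dots,x_m\mid\mathcal{T})$ and hence no upper bound on $h(\mu)$. Closing this requires one additional ingredient that your proof omits: either the theorem (Derriennic; Kaimanovich--Vershik) that for a random walk on a group started at a point the tail and invariant $\sigma$-algebras coincide $\bmod\ 0$---a genuinely group-theoretic fact, false for general Markov chains---or a separate proof of the entropy identity for $\mathcal{I}$ itself, e.g.\ $h(\mu)=H(\mu)-H(x_1\mid\mathcal{I})$. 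With either of these inserted, the argument is complete.
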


Another fundamental quantity associated to $\mu$ is the \emph{asymptotic
  speed}. Let $G$ be generated by a finite $S$ and let $|\cdot|$ be the
associated word metric. The asymptotic speed with respect to $S$ is the
limit
\[
\ell_S(\mu)=\lim_{k\to\infty} \frac1k \E|\al{g}_k|,
\]
which exists by sub-additivity, provided $\mu$ has finite first moment
(i.e.\ $\sum |g| \mu(g) <\infty$).

The definitions of asymptotic entropy and speed extend to the setting of
random walks with internal degrees of freedom. Namely let
$(\al{g}_k,\al{y}_k)$ be random walk with internal degrees of freedom on
$G\times Y$ with diagram $(M,P)$.  Suppose that the initial distribution of
$(\al{g}_0, \al{y}_0)$ and all edge measures $\mu_{xy}$ have finite
entropy.  Then the asymptotic entropy of the random walk with internal
degrees of freedom is well defined and does not depend on the initial
distribution of $(\al{g}_0,\al{y}_0)$ (hence it is a numerical invariant of
the diagram):
\begin{equation}
  \label{eq:h_def}
  h(M,P) = \lim_{k\to\infty} \frac1k H(\al{g}_k,\al{y_k})
  = \lim_{k\to\infty} \frac1k H(\al{g}_k).
\end{equation}
Similarly the asymptotic speed $\ell_S(M,P)$ is well-defined whenever all
edge measures and the starting point $\al{g}_0$ have finite first moment.
Asymptotic speed and entropy are related by the inequality
\begin{equation}\label{E: fundamental inequality}
  h(M, P)\leq v_S\ell_S(M, P)\leq \log |S|\ell_S(M, P),
\end{equation}
where $v_S=\lim\frac1n\log(|S^n|)\leq \log|S|$ is the \emph{exponential
  growth rate} of the group $G$ with generating set $S$. We will only use
that the asymptotic entropy has a linear upper bound in terms of the speed,
with constant depending only on the number of generators.

If $(M,P)$ is a random walk with internal degrees of freedom on $G\times
Y$, let $\nu$ be the stationary distribution of $P$, which is unique since
we assume $P$ is irreducible.  If $(M_W,P_W)$ is the trace over
$W$, then the asymptotic entropies satisfy the relation (see
\cite[Proof of Theorem 3.3]{Kai:munchaussen}):
\begin{equation}\label{entropy of trace}
  h(M_W,P_W)= \frac{1}{\nu(W)}h(M,P).
\end{equation}
Note that the fraction of time spent in a subset $W$
converges a.s.\ to $\nu(W)$.

\subsection{Random walks with internal degrees of freedom and groups acting
  on rooted trees}
\label{RWIDF and groups acting on trees}

Let $\mu$ be a probability measure on a  group $G<\Aut(\Tmb)$ whose support
generates $G$, and consider the associated random walk  $(\al{g}_k)$.  Fix a
level $n\geq0$, and recall that $G^{(n)}$ denotes the subgroup of
$\Aut(\T_{\sigma^n\mb})$ generated by $n$th level sections of elements in $G$ (\cref{def:group of sections}).

Pick a vertex $v\in \Tmb^n$, and let $\O\subset\Tmb^n$ be its orbit under
the action of $G$.  Then $(v\cdot\al{g}_k)$ is a Markov chain
on $\O$, and  a key observation made in \cite{Kai:munchaussen} is that $(\al{g}_k|_v, v\cdot\al{g}_k)$ is a random walk with
internal degrees of freedom on $G^{(n)}\times\O$ (restricting to an orbit
assures the irreducibility condition for the marginal Markov chain). Let $(M, P)$ be its diagram.

It easily follows from the section multiplication rule \eqref{sections
  multiplication rule} that $(M,P)$ has transition probabilities and edge
measures given for every $v,w\in \O$ by
\begin{equation}
  \label{edge measures}
  \begin{split}
    p_{vw} &= \mu\{g : v\cdot g=w\}\\
    \mu_{vw}(h)&= \mu\{g : v\cdot g=w, g|_v=h\} /p_{vw}
    \quad\text{whenever }p_{vw}\neq 0.
  \end{split}
\end{equation}
Note moreover, that if $\mu$ is symmetric one has
$\mu_{vw}=\hat{\mu}_{wv}$, where $\hat{\nu}(g)=\nu(g^{-1})$ denotes the
reflected measure with respect to group inversion.  If $\mu$ is symmetric
and finitely supported, the diagram $(M,P)$ is isomorphic as a graph to the
Schreier graph of $G$ acting on $\O$ with generating set $\supp(\mu)$.
When $G$ is an automaton group, this diagram might also be seen as a
weighted version of the \emph{dual Moore diagram} of the $n$th iteration of
the automaton.

\begin{defin}[Ascension diagram]\label{multi-ascension}
  Let $G<\Aut(\Tmb)$, and $\mu$ be a probability measure on $G$ supported on
  a generating set.
  \begin{enumerate}
  \item Let $\O\subset \Tmb^n$ be a $G$-orbit.  We denote by
    $T_\O(\mu)=(M,P)$ the random walk with internal degrees of freedom on
    $G^{(n)}\times\O$, whose transition probabilities and edge measures are
    given by (\ref{edge measures}).
  \item More generally, let $\W\subset\O$ be non-empty.  We denote by
    $T_\W(\mu)=(M_\W,P_\W)$ the trace over $\W$ of $T_\O(\mu)$.
  \end{enumerate}
  We call $T_\W(\mu)$ the \emph{ascension diagram} of measure $\mu$ with
  respect to vertex set $\W$.  The case when $\W$ coincides with the whole
  orbit is seen as a particular case of the same definition.
\end{defin}

The simplest case of the above construction is when $\W=\{w\}$ is a
single point.  In this case $T_w(\mu)$ is just a new probability measure on
$G^{(n)}$, that admits a clear interpretation: it is the step measure of
the random walk on $G^{(n)}$ that one sees by looking to the action on the
subtree rooted at $w$ at the times when $w$ is stabilized (see
\cite{Kai:munchaussen,AAV:amenability}).  In this case $T_w$ is an operator
acting on measures and was called the \emph{ascension operator} in
\cite{AAV:amenability}.  The next theorem was stated and proved in
\cite{Kai:munchaussen}, in the above simpler situation and when the action
of $G$ on levels is transitive.

\begin{thm}\label{T:ascension inequality}
  Let $G<\Aut(\Tmb)$, and $\mu$ a measure on $G$ with finite entropy.  Let
  $\Tmb^n=\O_1\sqcup\cdots\sqcup \O_r$ be the partition of the $n$th level of
  the tree into $G$-orbits. Consider a collection of non-empty subsets
  $\W_i \subset \O_i$.  Then
  \[
  h(\mu)\leq \sum |\W_i| \cdot h(T_{\W_i}(\mu)).
  \]
\end{thm}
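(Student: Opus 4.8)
The plan is to deduce the inequality from the entropy formula for traces, equation~\eqref{entropy of trace}, applied orbit by orbit, together with the subadditivity of entropy under taking joint distributions, part~(2) of \cref{P:entropy}. First I would fix the level $n$ and the random walk $(\al g_k)$ with step measure $\mu$, started at the identity. For each orbit $\O_i$, pick a base vertex $v_i\in\O_i$; as explained in \cref{RWIDF and groups acting on trees}, the process $(\al g_k|_{v_i},\, v_i\cdot\al g_k)$ is a random walk with internal degrees of freedom on $G^{(n)}\times\O_i$ with diagram $T_{\O_i}(\mu)$, and hence its asymptotic entropy equals $h(T_{\O_i}(\mu))$. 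Taking the trace over $\W_i$ multiplies the asymptotic entropy by $1/\nu_i(\W_i)$ where $\nu_i$ is the stationary measure of the induced Schreier-graph walk on $\O_i$, so $h(T_{\W_i}(\mu)) = h(T_{\O_i}(\mu))/\nu_i(\W_i)$; in particular $h(T_{\O_i}(\mu)) \le |\W_i|\, h(T_{\W_i}(\mu))$ once I check $\nu_i(\W_i)\ge 1/|\W_i|$, which is immediate if I arrange the induced walk to be reversible with, say, the uniform (or more precisely degree-proportional) stationary measure --- and since $\mu$ is symmetric the induced walk on the Schreier graph is reversible, so $\nu_i$ is proportional to vertex degrees, which are bounded below by $1$ and the bound $\nu_i(\W_i)\ge |\W_i|/|\O_i| \ge 1/|\O_i|$ is too weak; instead I use $\nu_i(\W_i) \ge \nu_i(w)$ for any single $w\in\W_i$ and the cruder $h(T_{\W_i}(\mu)) \ge h(T_{\{w\}}(\mu)) / (\text{something})$ --- let me reorganize.

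The cleaner route: it suffices to prove the two inequalities
\[
h(\mu) \;\le\; \sum_{i=1}^r h\bigl(T_{\O_i}(\mu)\bigr),
\qquad
h\bigl(T_{\O_i}(\mu)\bigr) \;\le\; |\W_i|\cdot h\bigl(T_{\W_i}(\mu)\bigr).
\]
For the first, I would couple all $r$ section-processes on the same probability space as the single walk $(\al g_k)$: the tuple $\bigl((\al g_k|_{v_i})_{i=1}^r,\ (v_i\cdot\al g_k)_{i=1}^r\bigr)$ is a deterministic function of $\al g_k$ on one hand, and on the other hand $\al g_k$ is a deterministic function of the section at the root together with the action on level $n$, which in turn is determined by the collection of orbit-data $(v\cdot\al g_k)_{v\in\Tmb^n}$ and $(\al g_k|_v)_{v\in\Tmb^n}$; actually $\al g_k$ is determined by its action on level $n$ and all its level-$n$ sections, and these sections are precisely the $G^{(n)}$-coordinates appearing across the various orbits (every $v\in\Tmb^n$ lies in some $\O_i$). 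So $H(\al g_k)$ is sandwiched: $H(\al g_k) \le H\bigl((\text{all orbit data})\bigr) \le \sum_i H\bigl(\al g_k|_{v_i}, v_i\cdot\al g_k\bigr)$ up to bounded additive error coming from the vertices of $\O_i$ other than $v_i$, whose sections are functions of $\al g_k|_{v_i}$ and the level-$n$ action via the section multiplication rule~\eqref{sections multiplication rule} --- wait, that is not quite right either since $g|_{w}$ for $w\ne v_i$ in the same orbit is not a function of $g|_{v_i}$ alone. The correct statement is that $g$ is a function of the list $\bigl(w\cdot g,\ g|_w\bigr)_{w\in\Tmb^n}$, and conversely; grouping the $w$'s by orbit and using subadditivity (part~(2) of \cref{P:entropy}) gives $H(\al g_k)\le \sum_i H\bigl((w\cdot\al g_k, \al g_k|_w)_{w\in\O_i}\bigr)$. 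Dividing by $k$ and letting $k\to\infty$, the right-hand term contributes $|\O_i|\,h(T_{\O_i}(\mu))$, not $h(T_{\O_i}(\mu))$; to fix the constant I instead note that $(w\cdot\al g_k,\al g_k|_w)_{w\in\O_i}$ is itself a random walk with internal degrees of freedom on $G^{(n)}\times\O_i^{?}$ --- no. The honest fix is: the relevant process per orbit is the \emph{single-vertex} chain $(\al g_k|_{v_i}, v_i\cdot\al g_k)$, whose asymptotic entropy is $h(T_{\O_i}(\mu))$, and the other coordinates $(\al g_k|_w)_{w\ne v_i}$ together contribute only lower-order entropy because at each time $k$, $\al g_k|_w$ differs from $\al g_k|_{v_i}$ by a product of at most $|\O_i|$ terms each drawn from a fixed finite set governing the transitions between $v_i$ and $w$; more precisely $\al g_k|_w = (\al g_k|_{v_i}) \cdot \bigl(\text{correction depending on the path of } v_i\cdot\al g_\bullet \text{ between } w \text{ and } v_i\bigr)$, but this correction is not a bounded random variable. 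The truly clean argument is the one in Kaimanovich: use~\eqref{eq:h_def}, which says $h(M,P)=\lim \frac1k H(\al g_k)$ for the $G$-coordinate \emph{alone}, so I should directly compare $H(\al g_k)$ with the sum of the $G^{(n)}$-coordinate entropies $H(\al g_k|_{v_i})$ across orbits.

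So here is the final shape of the proof. Since $\mu$ is supported on a generating set (or, more carefully, replacing $G$ by $\langle\supp\mu\rangle$, which does not change $h(\mu)$ and only shrinks the relevant $G^{(n)}$), the element $\al g_k\in G$ is determined by the pair consisting of its permutation of the level $\Tmb^n$ and the tuple of its level-$n$ sections $(\al g_k|_w)_{w\in\Tmb^n}$. The permutation takes values in a finite set, so contributes $O(1)$ entropy. Partitioning $\Tmb^n$ into the orbits $\O_1,\dots,\O_r$ and applying subadditivity,
\[
H(\al g_k)\;\le\; O(1) + \sum_{i=1}^{r}\ \sum_{w\in\O_i} H\bigl(\al g_k|_w\bigr).
\]
For fixed $i$ and fixed $w\in\O_i$, the process $(\al g_k|_w,\ w\cdot\al g_k)$ is a random walk with internal degrees of freedom with diagram $T_{\O_i}(\mu)$ (only the initial internal state $w$ differs), so by the initial-distribution independence in~\eqref{eq:h_def}, $\lim_k\frac1k H(\al g_k|_w) = h(T_{\O_i}(\mu))$ for every $w\in\O_i$. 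Dividing the displayed inequality by $k$, letting $k\to\infty$, gives $h(\mu)\le \sum_i |\O_i|\cdot h(T_{\O_i}(\mu))$. Finally, the trace relation~\eqref{entropy of trace} applied to $T_{\O_i}(\mu)$ and its trace $T_{\W_i}(\mu)$ over $\W_i\subset\O_i$ reads $h(T_{\W_i}(\mu)) = h(T_{\O_i}(\mu))/\nu_i(\W_i)$, where $\nu_i$ is the stationary measure of the Schreier-graph walk on $\O_i$; since that walk is reversible (as $\mu$ is symmetric) its stationary measure is proportional to vertex degrees, hence for at least one $w\in\W_i$ we have $\nu_i(w)\ge \nu_i(\W_i)/|\W_i| \ge 1/|\O_i|\cdot$(bounded ratio) --- but combining directly, $\nu_i(\W_i) \le 1$ gives nothing, so I instead simply use $h(T_{\O_i}(\mu)) = \nu_i(\W_i)\, h(T_{\W_i}(\mu)) \le h(T_{\W_i}(\mu))$ and absorb the factor $|\O_i|$... which is not $|\W_i|$. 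The genuinely correct bookkeeping, which I expect to be the main technical point, is that $h(\mu)$ should be compared not with $|\O_i| h(T_{\O_i}(\mu))$ but, after choosing \emph{one} representative $v_i$ per orbit as in Kaimanovich's original argument and checking that the sections at the other $|\O_i|-1$ vertices of the orbit carry only $o(k)$ entropy (this uses that, along the trajectory, $\al g_k|_w$ and $\al g_k|_{v_i}$ are related through a correction supported, at stopping times when the internal coordinate sits at a fixed vertex, on the finitely-generated group but with controlled growth --- precisely, it follows from the trace construction that the asymptotic entropy seen through any single vertex of the orbit is the same and there is no extra contribution), one gets $h(\mu) \le \sum_i h(T_{\O_i}(\mu))$, and then the trace formula gives $h(T_{\O_i}(\mu)) = \nu_i(\W_i) h(T_{\W_i}(\mu)) \le h(T_{\W_i}(\mu)) \le |\W_i| h(T_{\W_i}(\mu))$. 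The main obstacle is thus this last bookkeeping: proving $h(\mu)\le\sum_i h(T_{\O_i}(\mu))$ with the sharp constant $1$ in front, which requires showing that looking at the action on a single orbit loses no asymptotic entropy beyond what the other orbits recover; I would handle it exactly as in \cite{Kai:munchaussen}, by writing $\al g_k$ as a function of its restricted actions and sections along the orbits and invoking~\eqref{eq:h_def} for each piece, and then insert the (harmless) factor $|\W_i|\ge 1/\nu_i(\W_i)$ at the very end via~\eqref{entropy of trace}.
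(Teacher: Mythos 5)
Your first step is correct and is exactly the paper's: writing $\al g_k$ as a function of its level-$n$ action and its level-$n$ sections, applying \cref{P:entropy}(2), and using \eqref{eq:h_def} for each process $(\al g_k|_w,\, w\cdot\al g_k)$ yields $h(\mu)\le\sum_i|\O_i|\,h(T_{\O_i}(\mu))$. The gap is in the second half. You treat the factor $|\O_i|$ as an obstacle to be removed and end up asserting $h(\mu)\le\sum_i h(T_{\O_i}(\mu))$ with constant $1$, on the grounds that the sections at the other $|\O_i|-1$ vertices of an orbit ``carry only $o(k)$ entropy.'' That claim is false in general: each of the $|\O_i|$ section processes has asymptotic entropy $h(T_{\O_i}(\mu))$ and they are genuinely distinct group elements carrying roughly independent information (think of lamplighter-type behaviour), so the joint entropy really can be of order $|\O_i|\,h(T_{\O_i}(\mu))\,k$. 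No constant-$1$ bound is available, and none is needed.

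The missing observation is that the Markov chain $P$ on $\O_i$ given by $p_{vw}=\mu\{g: v\cdot g=w\}$ is \emph{doubly stochastic} (each $g$ acts on $\O_i$ by a bijection), so its stationary measure $\nu_i$ is the \emph{uniform} measure on $\O_i$ --- not a degree-proportional measure of some weighted graph, which is where your reasoning went astray. Hence $\nu_i(\W_i)=|\W_i|/|\O_i|$ exactly, and \eqref{entropy of trace} gives
\[
|\O_i|\,h(T_{\O_i}(\mu)) \;=\; |\O_i|\,\nu_i(\W_i)\,h(T_{\W_i}(\mu)) \;=\; |\W_i|\,h(T_{\W_i}(\mu)),
\]
so the bound from your (correct) first step is literally equal to the claimed right-hand side. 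You in fact had $\nu_i(\W_i)=|\W_i|/|\O_i|$ in hand at one point and discarded it as ``too weak'' because you were aiming at the wrong target inequality $h(T_{\O_i}(\mu))\le|\W_i|\,h(T_{\W_i}(\mu))$ rather than $|\O_i|\,h(T_{\O_i}(\mu))\le|\W_i|\,h(T_{\W_i}(\mu))$. Note also that none of this uses symmetry of $\mu$; the theorem assumes only finite entropy.
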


\begin{proof}
  Consider first the case that $\W_i=\O_i$ for every $i$.  The element
  $\al{g}_k$ is completely determined by its action on the
  $n$th level and its sections at vertices of that level, hence by the
  data of $(\al{g}_k|_v, v\cdot \al{g}_k)$ for every $v\in \Tmb^n$. By
  \cref{P:entropy}(2)
  \[
  H(\al{g_k})\leq \sum_{v\in X^n}
  H(\al{g_k}|_v\:,\:v\cdot\al{g}_k)
  = \sum_{v\in \O_1} H(\al{g}_k|_v\:,\:v\cdot\al{g}_k)+\cdots+\sum_{v\in \O_r}
  H(\al{g}_k|_v\:,\:v\cdot\al{g}_k).
  \]
  The latter are random walks with internal degrees of freedom with
  diagrams $T_{\O_i}(\mu)$. Dividing by $k$ and letting $k\to\infty$
  \[
  h(\mu)\leq\sum_{i=1}^r |\O_i| \cdot h(T_{\O_i}(\mu)).
  \]
  For general $\W_i\subset \O_i$ the theorem follows from relation
  \eqref{entropy of trace} and the observation that the stationary measure
  on each orbit is the uniform measure on it.
\end{proof}

\subsection{An illustrative example: the Hanoi Tower group}
\label{sec:hanoi}

Before turning to the proof of \cref{T:bounded Liouville} in full
generality, let us illustrate how the notions from the previous paragraph
are used in one particularly simple example --- the \emph{Hanoi Tower group}.
This group is generated by a 4-state automaton over the 3-elements
alphabet, and it is related to the classical Hanoi Tower game on 3 pegs.  Its
Schreier graphs on the levels of the tree are discrete approximation of the
Sierpinski gasket (see for instance \cite{Hanoi}).

The Hanoi group is the automaton group $G<\aut(\T_3)$ generated by the
three automorphisms of finite type $a,b,c$ defined by the wreath recursions
\begin{align*}
  a&=(a,e,e)(12)  &  b&=(e,b,e)(02)  &  c&=(e,e,c)(01).
\end{align*}
Note that $a^2=b^2=c^2=e$.  The Moore diagram of the automaton is shown in
\cref{fig:hanoi}.

One can prove that for every symmetric measure $\mu$ supported on any
generating set of $G$, and for every single vertex $w\in\T_3$, the
ascension operator $T_w(\mu)$ is infinitely supported.  In particular, $G$
admits no finitely supported \emph{self-similar measure} in the sense of
\cite{Kai:munchaussen}.  However the Liouville property can be shown as
follows.

Consider the uniform measure $\mu$ on the standard generators $\{a,b,c\}$.
The group $G$ acts transitively on the levels of the tree, so there is a
single orbit.  For every level $n$ set $\W_n=\{0^n,1^n,2^n\}\subset
\T_3^n$.  The diagram of $T_{\W_n}(\mu)$ is a triangle with self-loops.
The self-similarity of the generators $a,b,c$ (their sections are either
themselves or trivial) yields that $T_{\W_n}(\mu)$ has the same measures
$\mu_{xy}$ on the edges for every $n$. \cref{fig:hanoi} also shows the
ascension diagrams $T_{\O_n}(\mu)$ with respect to the whole orbit
$\O_n=\mathbb{T}_3^n$ and $T_{\W_n}(\mu)$ with respect to set $\W_n$.

\begin{figure}
  \begin{center}
    \vspace{-20pt}
    \includegraphics[width=0.74\textwidth]{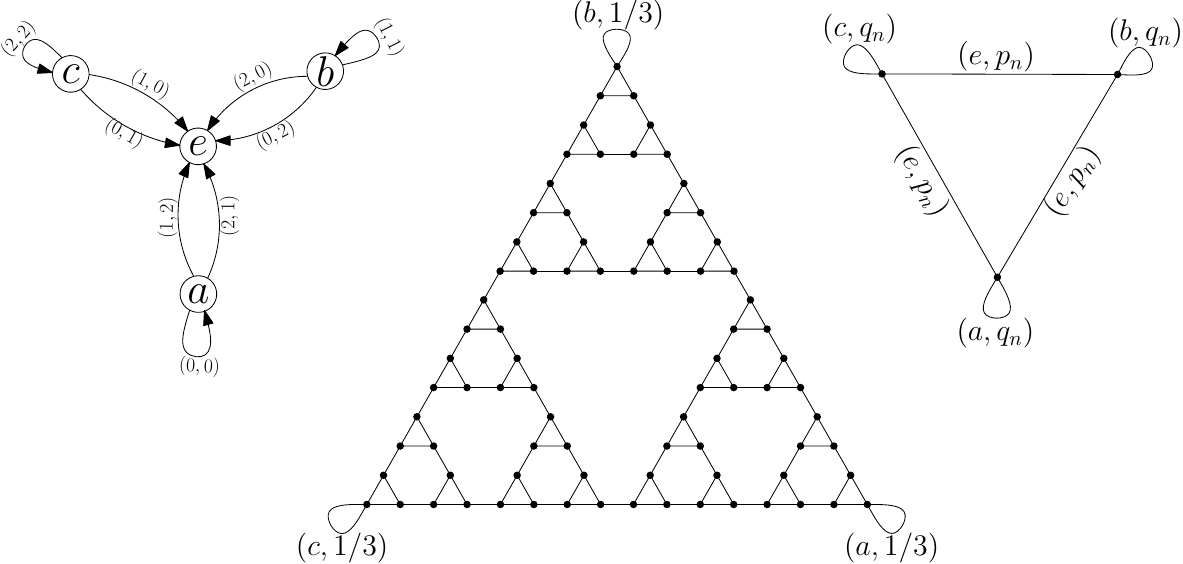}
  \end{center}
  \caption{Left: the Moore diagram for the Hanoi automaton.  Centre: The
    diagram for $T_{\O_n}(\mu)$ for $n=4$.  Here $a,b,c,e$ denote delta
    measures at those elements of the group.  Every edge except the
    self-loops at vertices $\W_n$ is labelled $(e,1/3)$.  Right: the
    diagram for $T_{\W_n}(\mu)$.}
  \label{fig:hanoi}
\end{figure}

The diagrams $T_{\W_n}(\mu)$ differ only in the transition probabilities
$p_n$ and $q_n$ (which satisfy $2p_n+q_n = 1$).  These can be determined in
turn by analysing simple random walk on the Schreier graphs of the group
$G$ acting on the levels of the tree (shown in the left).  It is easy to
see that these Schreier graphs converge to an infinite recurrent graph (in
the local topology, rooted at a vertex of $\W_n$). This implies that
$p_n\to 0$.  Since the generators are involutions, this roughly tells us
that the random walks with internal degrees of freedom $T_{\W_n}(\mu)$ get
``lazier'' as $n$ grows.  More precisely, using \eqref{E: fundamental inequality} one can
find a sequence of real numbers $\alpha_n$ decreasing to zero and prove an
\emph{a-priori} upper bound $h(T_{\W_n}(\mu)) \leq \alpha_n$ (we can have
$\alpha_n = C p_n$).  \cref{T:ascension inequality} then yields
\[
h(\mu) \leq 3 h(T_{\W_n}(\mu)) \leq 3 \alpha_n \to 0,
\]
which implies \emph{a-fortiori} that $h(\mu)=0$ (and also
$h(T_{\W_n}(\mu))=0$ for every $n$).

A similar argument actually applies to every symmetric and finitely
supported measure on the Hanoi group $G$, with a different choice of
$\W_n$.  We omit further details, as this is a special case of \cref{T:bounded Liouville}.

\subsection{Groups acting on a rooted tree with recurrent Schreier graphs}
\label{S: recurrent}

We say that a finitely generated subgroup $G$ of $\Aut(\Tmb)$ acts on
$\partial\Tmb$ \emph{with recurrent Schreier graphs} if every orbital
Schreier graph for the action of $G$ on $\partial\Tmb$ is recurrent.  Since
recurrence is stable under rough isometries, this property does not depend
on the choice of the finite symmetric generating set of $G$, and more
generally of a symmetric and finitely supported probability measure $\mu$
on $G$, see \cite[Theorem 2.17]{LyPe}.  Recurrence of the Schreier graphs
is related to groups of automorphisms of bounded type by the following result.

\begin{prop}[\cite{Bo07, JNS:recurrentgrupoids}]\label{T:bounded recurrent}
  Let $G$ be a finitely generated group of automorphisms of bounded type of
  a spherically homogeneous rooted tree $\Tmb$. Then $G$ acts on $\partial
  \Tmb$ with recurrent Schreier graphs.
\end{prop}

This fact was first proved by Bondarenko \cite{Bo07} for groups generated
by bounded automata; see \cite[Lemma 4.3]{JNS:recurrentgrupoids} for a
more general version which includes groups of
automorphisms of bounded type of a rooted tree.

\medskip

Let $G < \Aut(\Tmb)$ be a finitely generated group acting on $\partial
\Tmb$ with recurrent Schreier graphs, and endow it with a symmetric
finitely supported probability measure $\mu$.  Fix a starting ray $\gamma =
(v_0,v_1,\dots) \in \partial\Tmb$.

If $(\al{g}_k)_{k\in \N}$ is the random walk on $G$ driven by $\mu$, then
$(v_n \cdot \al{g}_k)$ is a Markov chain on level $n$ of the tree.  These
chains are naturally coupled, and the $n$th level Markov chain projects to
the previous ones. The Markov chain on the boundary of the tree
$(\gamma\cdot \al{g}_k)_{k\in \N}$ projects onto all of these.  Consider a
family of rays $\Ubb_\infty\subset \partial\Tmb$ containing $\gamma$ and
denote $\Ubb_n\subset \Tmb^n$ the set of projections of rays in
$\Ubb_\infty$ to the $n$th level of the tree. Let $\al{T}_\infty$ be the
first positive return time of $(\gamma \cdot \al{g}_k)$ to $\Ubb_\infty$
and let $\al{T}_n$ be the first positive return time of $(v_n\cdot \al{g}_k)$ to
$\Ubb_n$. Since the chains project onto each other, we have
\[
\al{T}_1\leq \al{T}_2\leq \al{T}_3\leq \dots \leq \al{T}_\infty.
\]
By recurrence of $(\gamma \cdot \al{g}_k)$, the sequence $(\al{T}_n)$ is bounded, and so
is constant for $n\geq R$ for some random $R$.  We therefore have
$\P(\al{T}_n \neq \al{T}_m) \leq \P(R > m\wedge n)$ which yields the
following proposition.

\begin{prop}\label{P: recurrence}
  With the notations above, the return times satisfy $\P(\al{T}_n \neq
  \al{T}_m)\to 0$ as $n,m\to\infty$ with $n, m\in \N\cup\{\infty\}$.
\end{prop}


\section{Proof of \cref{T:bounded Liouville}}
\label{sec:proof thm 1}

Throughout this section we fix a finitely generated group $G$ acting faithfully on
$\Tmb$ by automorphisms of bounded type, equipped with a finite, symmetric
generating set $S$. We also let $K>0$ be the maximal depth of a generator
$s\in S$ (see \cref{D: bounded automorphisms}).

\subsection{Deep level sections in groups of automorphisms of bounded type}
\label{S:sections_bounded}

The aim of this subsection is to construct generating sets for the groups of
level $n$ sections $G^{(n)}$ that have a special form adapted to our
purpose.  For every level $n$, denote by $\W_n\subset \Tmb^n$ the set of
vertices $w\in \Tmb^n$ such that the section $s|_w$ is non-trivial for some
$s\in S$.  Note that since each generator of $s$ has finitely many singular
rays, and non-trivial sections are all within distance $K$ of one of these rays,
the size of $\W_n$ is uniformly bounded in $n$.

Denote by $\Abb_\infty\subset \partial \Tmb$ the finite set of
rays of the tree which are singular for some generator $s\in S$ (see
\cref{D: singular rays}), and $\Abb_n$ its projection to level $n$ of the
tree.  The other non-trivial sections are at vertices of $\Bbb_n =
\W_n\setminus \Abb_n$.

\begin{remark}\label{R: gamma v}
  Observe that if $n$ is large enough, rays in $\Abb_\infty$ have distinct
  projections to level $n$. We assume henceforth that $n$ is large enough
  for this to hold.  For any such $n$ and every $v\in\Abb_n$ we denote by
  $\gamma_v\in \partial\T_{\sigma^n\mb} $ the continuation of this ray
  above $v$, i.e.\ the unique ray of the shifted tree $\T_{\sigma^n\mb}$
  such that $\gamma_v v \in \Abb_\infty$.
\end{remark}

For every $v\in\Abb_n$ and every $s\in S$ the section $s|_v$ is either
finitary or directed along $\gamma_v$.  Therefore $(\gamma_v,\:s|_v)$
belongs to the groupoid of directed automorphisms
$\mathcal{D}_{\sigma^n\bar{m}}$. Let $\Ag_n<\mathcal{D}_{\sigma^n\bar{m}}$
be the subgroupoid generated by $(\gamma_v,\:s|_v)$ when $v$ runs in
$\Abb_n$ and $s$ runs in $S$. By \cref{L: locally finite groupoid} the
groupoid $\Ag_n$ is finite, moreover its cardinality is uniformly bounded
in $n$ since the cardinality of $\Abb_n$ is bounded (in fact, it is
constant and equal to the cardinality of $\Abb_\infty$ if $n$ is large
enough) and all elements $(\gamma_v,\:s|_v)$ have depth at most $K$. Let
$\A_n$ be the projection of $\Ag_n$ to the group $\Aut(\T_{\sigma^n\mb})$,
i.e.
\[
A_n=\{g : \exists \gamma \text{ such that } (\gamma, g)\in \Ag_n \}.
\]

Consider now the case $v\in \Bbb_n$. Then for every $s\in S$ the section
$s|_v$ is finitary with depth at most $K$.  All such sections generate a
finite group; let us denote it $B_n$. This finite group also has uniformly
bounded cardinality.

Hence for every generator $s$ and every $v\in \W_n$ the section $s|_v$
belongs to $A_n\cup B_n$. It follows that the set $S_n=A_n\cup B_n$
generates the group of sections $G^{(n)}$ (see \cref{def:group of sections}
and \cref{R: sections generators}).

We summarize the discussion above as a proposition:

\begin{prop}\label{P:sections}
  For every large enough $n$, the group of sections $G^{(n)}$ admits a
  finite, symmetric generating set $S_n$ whose cardinality is bounded
  uniformly in $n$ and which can be written as a union $S_n=\A_n\cup\B_n$,
  where:
  \begin{itemize}
  \item $A_n$ is the projection to $\Aut(\T_{\sigma^n\mb})$ of a finite
    subgroupoid $\Ag_n$ of the groupoid $\mathcal{D}_{\sigma^n\bar{m}}$ of
    directed automorphisms of $\mathbb{T}_{\sigma^n\bar{m}}$;
  \item $B_n$ is a finite group of finitary automorphisms of $\T_{\sigma^n\mb}$.
  \end{itemize}
  Moreover, for every $s\in S$ we have that $s|_v\in A_n$ for $v\in\Abb_n$,
  $s|_v\in B_n$ for $v\in\Bbb_n$, and $s|_v=e$ otherwise.
\end{prop}

\subsection{Vanishing of asymptotic entropy}

We keep all notations introduced in the previous section: the vertex sets
$\W_n, \Abb_n, \Bbb_n$, the set of rays $\Abb_\infty$, and the generating
sets $S_n=A_n\cup B_n$ for the groups of sections.  We consider a
symmetric, finitely supported probability measure $\mu$ on $G$ with support
$S$.

Fix a level $n$ large enough so that \cref{R: gamma v} applies. Take any
orbit $\O\subset \Tmb^n$ for the action of $G$ and consider at first the
ascension diagram $T_\O(\mu)$ with respect to the orbit.  Let
$(\mu_{vw})_{v, w\in\O}$ be the edge measures of this ascension
diagram.  \cref{P:sections}, with the definition of the edge measures
\eqref{edge measures} and their symmetry property $\mu_{wv} =
\hat{\mu}_{vw}$ imply the following facts, which we summarize for later
reference.

\begin{claim}\label{C:mu_properties}
  \begin{enumerate}
  \item If $v,w\in \Abb_n$, then the measure $\mu_{vw}$ is supported in
    $\A_n$.  Moreover $(\gamma_v, h)$ belongs to the groupoid $\Ag_n$ for
    any $h\in \supp(\mu_{vw})$.
  \item If $v,w\in \Bbb_n$, then the measure $\mu_{vw}$ is supported in the
    finite group $\B_n$.
  \item Otherwise, $\mu_{vw}$ is concentrated on the identity.
  \end{enumerate}
\end{claim}

As a first consequence, observe that whenever the orbit $\O$ does not
intersect $\W_n=\Abb_n\sqcup\Bbb_n$ one has immediately $h(T_\O\mu)=0$,
since all edge measures of $T_\O$ are trivial and the corresponding random
walk with internal degrees of freedom is just a finite Markov chain.

Suppose now that there are $r=r(n)$ orbits in level $n$ that have non
trivial intersection with $\W_n$, and denote them $\O_{1,n}, \dots,
\O_{r,n}$.  Set $\W_{i,n}=\W_n\cap \O_{i,n}$, and consider the ascension
diagram $T_{\W_{i,n}}(\mu)$.  Note that the edge measure of this diagram
also satisfy \cref{C:mu_properties} (part 3 is vacuous here).
\cref{T:ascension inequality} and the above observation that
$h(T_\O(\mu))=0$ whenever $\O\cap\W_n = \varnothing$ give
\begin{equation}\label{main inequality}
  h(\mu) \leq \sum_{i=1}^{r} |\W_{i,n}| \cdot h(T_{\W_{i,n}}(\mu)).
\end{equation}
To prove that $h(\mu)=0$ we estimate the asymptotic speed of the diagrams
$T_{\W_{i,n}}(\mu)$:

\begin{prop}\label{L: length estimate}
  With the above notations, there exists a sequence $a_n\to 0$ so that the
  speed of the diagrams $T_{\W_{i,n}}(\mu)$ with respect to the generating
  set $S_n$ satisfies
  \[
  \ell_{S_n}(T_{\W_i,n}(\mu))\leq a_n,
  \]
  for every $i=1,\ldots r(n)$.
\end{prop}

Let us explain how this concludes the proof of \cref{T:bounded Liouville}.
Since the generating sets $S_n$ have bounded cardinalities, we deduce from
\eqref{E: fundamental inequality} that $h(T_{\W_{i,n}}(\mu))\leq C a_n$ for
a constant $C$ that does not depend on $n$.  Since the cardinalities of
$\W_{i,n}$ and $r(n)$ are uniformly bounded, \eqref{main inequality}
implies that there exists $C'>0$ so that $h(\mu)\leq C' a_n \to 0$.  Hence
$h(\mu)=0$.  As noted, this is equivalent to the Liouville property for
$(G,\mu)$ (see \cite{KV83, Dierrenic}).  The rest of this section contains
the proof of this speed estimate.

\begin{proof}[Proof of \cref{L: length estimate}]
  Let $(\al{g}_k, \al{v_k})$ be a random walk with internal degrees of
  freedom with diagram $T_{\W_{i,n}}(\mu)$ starting from $(e,v)$, where
  $v\in\W_{i,n}$ is arbitrary.  Let $|\cdot|$ be the word metric on
  $G^{(n)}$ with respect to the generating set $S_n=\A_n\cup\B_n$. We shall
  prove that there exists a sequence $a_n\to 0$ such that for every $k\geq
  0$ we have
  \[
  \E |\al{g}_k| \leq 1 + a_n k,
  \]
  uniformly in $i$ and the starting point $v\in\W_{i,n}$.  To
  simplify the notations we will henceforth omit the index $i=1,\ldots,
  r(n)$, writing $\W_n, \Abb_n, \Bbb_n$ for $\W_{i,n},
  \Abb_{i,n},\Bbb_{i,n}$.

  Let $\al{h}_1, \dots, \al{h}_k$ be the increments $\al{h}_j =
  \al{g}_{j-1}^{-1}\al{g}_{j}$.  Recall that, conditionally to the
  positions of $\al{v}_j, \al{v}_{j+1}$ the distribution of the increment
  $\al{h}_{j+1}$ is given by the edge measure $\mu_{\al{v}_j,\al{v}_{j+1}}$
  of the diagram $T_{\W_n}(\mu)$, which satisfies \cref{C:mu_properties}.
  We consider two types of ``bad events'' that may happen at some times
  $j\geq 1$.
  \begin{description}
  \item[Traverse:] One of $\al{v}_{j-1}, \al{v}_{j}$ belongs to
    $\Abb_n$ and the other to $\Bbb_n$.
  \item[Bad alignment:] Both $\al{v}_{j-1}, \al{v}_{j}$ belong to
    $\Abb_n$ and $\gamma_{\al{v}_{j-1}}\cdot\al{h}_{j} \neq
    \gamma_{\al{v}_{j}}$.  (Recall the notation $\gamma_v$ from \cref{R:
      gamma v}.)
  \end{description}

  Let $N_k$ be the total number of bad events of either type up to time
  $k$.  We divide the proof of \cref{L: length estimate} into three steps,
  given by \cref{step1,L: bad event 1,L: bad event 2} below, stating that
  the word length is bounded by the number of bad events, and that the
  propabiblity of the two types of bad events at each step is small.

  Note that the probability of of a bad event happening at the $j$th step
  conditionally to $\al{v}_j=v$ only depends on $v$ and on the diagram
  $T_{\W_n}(\mu)$.  We prove that there exists a sequence $a_n\to 0$ so
  that this conditional probability is bounded above by $a_n$, uniformly in
  $v\in \W_{n}$.  This implies $\E N_k\leq c_nk$, concluding the proof of
  \cref{L: length estimate} by \cref{step1}.
For both types of bad events, the proof of this bound is based on \cref{P: recurrence}, which
  applies since $G$ acts on $\partial \Tmb$ with recurrent Schreier graphs
  (see \cref{T:bounded recurrent}).

  \begin{lemma}\label{step1}
    The word metric $|\al{g}_k|$ is bounded above by $1 + N_k$.
  \end{lemma}

  \begin{proof}
    Let $s<t$ be such that  no bad event happens for $j\in [s,t]$. Then either $v_j\in\Abb_n$
    for all $j\in[s,t]$, or else $v_j\in\Bbb_n$ for all $j\in[s,t]$.  In the second
    case, the increment $\al{h}_j$ belongs to the finite group $\B_n$, hence
    their product has length 1 with respect to the generating set
    $S_n=\A_n\cup\B_n$. In the first case, since there is no bad alignment, we have
    $\gamma_{\al{v}_{j-1}}\cdot\al{h}_j=\gamma_{\al{v}_{j}}$ for every
    $j\in[s,t]$. Assume that the second case holds. By \cref{C:mu_properties}, for every
    $j$ the couple $(\gamma_{\al{v}_{j-1}}, \al{h}_{j})$ belongs to the
    finite groupoid $\Ag_n$.  The condition that
    $\gamma_{\al{v}_{j-1}}\cdot\al{h}_{j}=\gamma_{\al{v}_{j}}$ guarantees
    that the product of two consecutive such couples is defined in the
    groupoid, hence belongs to $\Ag_n$.  It follows that
    $(\gamma_{\al{v}_{s-1}}, \al{h}_{s})\cdots
    (\gamma_{\al{v}_{t-1}}, \al{h}_{j_2})=(\gamma_{\al{v}_{t}},
    \al{h}_{s}\cdots\al{h}_{t})\in\Ag_n$ and thus
    $\al{h}_{j_1+1}\cdots\al{h}_{j_2}\in\A_n$ has length 1.

    We conclude that the word length of $\al{g}_k=\al{h}_1\cdots\al{h}_k$
    is bounded by one more than the total number of bad events.
  \end{proof}

  Recall that $(\mu_{vw})$ and $(p_{vw})$ denote the edge measures and the
  marginal transition probabilities of the ascension diagram
  $T_{W_n}(\mu)=(M,P)$.

  \begin{lemma}\label{L: bad event 1}
    There exists a sequence $a'_n \to 0$ so that
    \[
    P(\Bbb_n, \Abb_n) = P(\Abb_n, \Bbb_n) := \sum_{v\in \Abb_n, w\in\Bbb_n}
    p_{vw} \leq a'_n.
    \]
  In particular the probability that a traverse happens at any time is
  bounded above by $a'_n$.
  \end{lemma}

  \begin{proof}
    First, observe that the matrix $P$ is symmetric, as it is the trace of
    a symmetric Markov chain on a recurrent subset. Hence $P(\Bbb_n,
    \Abb_n) = P(\Abb_n, \Bbb_n)$.

    We now argue this quantity is small.  Fix some $\gamma \in
    \Abb_\infty$, and let $v\in \Abb_n$ be the unique vertex that belongs
    to $\gamma$.  We shall prove that $P(v,\Bbb_n):= \sum_{w\in\Bbb_n}
    p_{vw}$ tends to zero as $n\to\infty$.  This is sufficient, since there
    are finitely many choices for $\gamma$.

    Let $(\tilde{\al{g}}_j)$ the random walk on the group $G$ with step
    measure $\mu$.  Recall from the definition of the ascension diagram (see
    \cref{RWIDF and groups acting on trees}) that $P(v, \Bbb_n)$ is the
    probability that $v\cdot \tilde{\al{g}}_\al{t}\in \Bbb_n$, where
    $\al{t}$ is the first return time of $v\cdot \tilde{\al{g}}_j$ to
    $\W_n$.

    Consider simultaneously level $n-K$, and recall that (from the
    definition of $K$) the projection of $\W_n$ to this level is contained
    in $\Abb_{n-K}$.  Let $w\in\Abb_{n-K}$ be the projection of $v$.  Let
    $\al{T}_{n-K}$ be the first return time of $w\cdot\tilde{\al{g}}_j$ to
    $\Abb_{n-K}$ and $\al{T}_n$ be the first return time of
    $v\cdot\tilde{\al{g}}_j$ to $\Abb_n$.  We now apply \cref{P:
      recurrence} to the family of rays $\Abb_\infty$ with starting point
    $\gamma$.  Since $w\cdot\tilde{\al{g}}_\al{t}$ is the projection of
    $v\cdot\tilde{\al{g}}_\al{t}$ and the latter is in $\W_n$ it follows
    that $w\cdot\tilde{\al{g}}_\al{t}\in\Abb_{n-K}$, and in particular
    $\al{T}_{n-k} \leq \al{t}$.

    Suppose $v\cdot\tilde{\al{g}}_\al{t}\in \Bbb_n$, then
    $(v\cdot\tilde{\al{g}}_j)$ returns to $\Abb_n$ strictly after time
    $\al{t}$.  Hence $\al{T}_{n-K}\leq \al{t}< \al{T}_n$. By \cref{P:
      recurrence} the probability of this event tends to $0$ as
    $n\to\infty$, concluding the proof.
  \end{proof}

  \begin{lemma}\label{L: bad event 2}
    There exists a sequence $a''_n \to 0$, so that for every
    $v\in\Abb_n$ we have $q_v\leq a''_n$, where
    \[
    q_v := \sum_{w\in\Abb_n} p_{vw} \mu_{vw} \{h : \gamma_v\cdot h\neq
    \gamma_w\}
    \]
    is the probability that a bad alignement event happens at the $j$th step
    conditioned on $\al{v}_{j-1}=v$.
  \end{lemma}

  \begin{proof}
    This proof too is based on \cref{P: recurrence}.  Fix again $\gamma\in
    \Abb_\infty$ and let $v\in\Abb_n$ be the unique vertex belonging to
    $\gamma$.  We assume that $n$ is large enough so that \cref{R: gamma v}
    applies.  As before, it is enough to prove that $q_v$ tends to zero as
    $n\to \infty$ and $v$ belongs to $\gamma$.  Let $(\tilde{\al{g}}_j)$ be
    the random walk on $G$ with step measure $\mu$ and let $\al{t}$ be the
    first return time of $v\cdot\tilde{\al{g}}_j$ to $\W_n$. Let $\al{T}_n$
    (resp.\ $\al{T}_\infty$) be the return time of $v\cdot\tilde{\al{g}}_j$
    (resp.\ $\gamma\cdot\tilde{\al{g}}_j$) to $\Abb_n$ (resp.\
    $\Abb_\infty$).  Setting $\al{w}:=v\cdot\tilde{\al{g}}_\al{t}$ the
    probability $q_v$ equals the probability that $\al{w}\in\Abb_n$ and
    $\gamma_v\cdot \tilde{\al{g}}_\al{t}|_v\neq \gamma_\al{w}$.  If this
    event happens we have $\al{T}_n=\al{t}$, while $\al{T}_\infty>\al{t}$.
    Indeed, $\gamma\cdot \tilde{\al{g}}_\al{t}= (\gamma_v\cdot
    \tilde{\al{g}}_\al{t}|_v)\al{w}\neq \gamma_\al{w}\al{w}$ and hence
    $\gamma\cdot \tilde{\al{g}}_\al{t}\notin\Abb_\infty$ (since
    $\gamma\cdot \tilde{\al{g}}_\al{t}$ contains $\al{w}$, but the unique
    ray in $\Abb_\infty$ that contains $\al{w}$ is $\gamma_\al{w}\al{w}$).
    This implies that $\al{T}_n<\al{T}_\infty$.  The probability of this
    event tends to zero as $n\to\infty$ by \cref{P: recurrence}.
  \end{proof}

  Setting $a_n=a'_n+a''_n$ we have $\E N_k\leq a_n k$.  This concludes the
  proof of \cref{L: length estimate}, and thus the proof of \cref{T:bounded
    Liouville} as noted.
\end{proof}

\section{Proof of \cref{T:spherically homogeneous}}
\label{sec:proof thm 2}

To make the proof of \cref{T:bounded Liouville} quantitative, the key idea
is to let the level $n$ tend to infinity together with the time $k$, at a
carefully chosen rate. One needs an estimate on the rate of convergence to
0 of the probabilities from \cref{L: bad event 1,L: bad event 2}.  Such
estimates can be obtained from a closer analysis of the Schreier graphs of
the action of $G$ on the finite levels of the tree using electric network
theory.

This section is organized as follows: In \cref{S:directed} we define
principal groups of directed automorphisms, and in
\cref{S:Sections_principal} we study sections in such groups.  Some
simplifications occur in this setting,  in particular the groupoid $\mathcal{A}_n$ can be chosen to be a group, and the bad alignment events cannot occur.  Then in
\cref{S:Resistances} we calculate lower bounds on the resistance in the
relevant Schreier graphs, and finally in \cref{S:entropy estimate} we
combine all ingredients to prove \cref{T:spherically homogeneous}.

\subsection{Principal groups of directed automorphisms and the mother group}
\label{S:directed}
\label{mother group}

Let $\mb$ be a bounded sequence as before, and set $m_*=\max_i m_i$.  The
$0$-ray in $\Tmb$, consists of all vertices of the form $0^n=0\dots0$.  The
neighbours of the zero ray in the tree are vertices of the form $v=x0^n$,
where $x\in X_{m_i}$ is the only non-zero letter in $v$.  Let

$H_\mb < \Aut(\Tmb)$ be the subgroup consisting of elements that are
directed along the zero ray, fix the zero ray, and have depth at most one
(recall that this means that their sections can be non-trivial only on the
zero ray or its neighbours).  Equivalently, an element $h\in\H_\mb$ has a
wreath recursion of the form
\[
h = (h',\tau_1,\cdots,\tau_{m_1-1})\rho,
\]
where $\tau_1,\dots,\tau_{m_1-1} \in \S_{m_2}$ are the sections at first-level vertices other than  $0$, the permutation $\rho\in\S_{m_1}$ is such
that $0\cdot\rho=0$ and $h'\in H_{\sigma\mb}$. The group $H_\mb$ is
locally finite.

We identify the symmetric group $S_{m_1}$ with the subgroup of $\Aut(\Tmb)$
consisting of automorphisms that permute vertices on the first level and
have trivial sections on them.  In the same way, $S_{m_n}$ identifies with
a subgroup of $\Aut(\T_{\sigma^n\mb})$ for every $n\geq 0$.

\medskip

The following groups were defined and studied by Brieussel (see
\cite{Brieu:nonuniformgrowth,Brieu:entropy,Brieu:folnersets}).  They are a
generalization of the mother group from \cite{BKN10}.

\begin{defin}\label{def:group of directed automorphisms}
  Let $A < \H_{\mb}$, $B < S_{m_1}$ be finite subgroups.  The \emph{the
    principal group of directed automorphisms} generated by $A$ and $B$ is
  the group $\langle A\cup B\rangle < \Aut(\Tmb)$.  We denote it by
  $\M(A,B)$.
\end{defin}
Note that the term "principal group of directed automorphisms" should be taken as whole, in fact the group $M(A, B)$ is generated by directed automorphisms but also contains automorphisms that are not directed.

Many groups acting on $\T_m$ embed in a group of
the form $M(A, B)$, see \cref{embedding mother group} below and also \cite[Section
9]{Brieu:nonuniformgrowth} for a slight generalization.  We shall omit
$A,B$ from the notation when there is no ambiguity, and write simply $\M$
for $\M(A,B)$.

There is an important particular case of \cref{def:group of directed
  automorphisms}.  Take $\mb$ a constant sequence, and set $B=S_m$.  For
$A$ we take all elements $h\in\H_m$ for which the section at $0$ is $h$
itself: $h|_0=h$.  Equivalently, $A$ is the group of automorphisms that
admit a wreath recursion of the form
\[
h = (h,\tau_1,\dots,\tau_{m-1}) \rho,
\]
where $\tau_1,\dots,\tau_{m-1} \in \S_m$ and $\rho\in\S_m$ is such that
$0\cdot\rho=0$.  It is easy to see that $h$ is determined by $\rho$ and the
$\tau_i$, and that $A$ is a finite group, isomorphic to $S_m
\wr_{X\setminus\{0\}} S_{m-1}$.  With these choices of $A$ and $B$, the
group $\M=\M(A,B)$ is generated by a bounded automaton, and is called the
\emph{mother group of bounded activity} over the $m$-element alphabet.

The mother group was first defined in \cite{BKN10} in the bounded activity
case.  An analogous generalization to higher activity degrees was provided
in \cite{AAV:amenability}.  Its significance relies on the fact that every
polynomial activity automaton group embeds in a mother group of the same
activity degree, possibly acting on a bigger alphabet.  We only use this
result in the bounded activity case:

\begin{thm}[\cite{BKN10,AAV:amenability}]\label{embedding mother group}
  Let $G < \Aut(\T_m)$ be a group generated by a bounded activity
  automaton.  Then there exists $m'$ such that $G$ embeds isomorphically in
  the mother group of bounded activity over $m'$ elements.
\end{thm}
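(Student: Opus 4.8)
The plan is to construct the embedding explicitly by tracking how a bounded automaton \emph{approximates} an element of the mother group near the zero ray. The key structural fact about a bounded activity automaton is that for each state $a\in A$, the set $\{w\in \T_m^n : a|_w \neq e\}$ has size bounded uniformly in $n$; combined with the Moore diagram analysis (no strongly connected component contains a cycle, cycles form a partial order of depth $0$), this means the ``activity'' of $a$ travels along the tree like a finite collection of directed rays. After discarding trivial states, one can show that each non-trivial state $a$ sits on at most one cycle, and that cycle passes through a vertex fixed by $a$ on the first level; the non-fixed first-level letters carry only finitely supported, eventually-permutation sections. This is precisely the local shape of a directed automorphism in $\H_{m'}$ for a suitable $m'$.

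First I would pass to a reduced automaton and perform a preliminary normalization: replace $A$ by the (still finite, still bounded) automaton consisting of all sections $g|_v$, $g\in A$, $v\in\T_m$, so that the state set is section-closed. Then I would analyze the cycle structure of the Moore diagram of this enlarged automaton. Since the activity degree is $0$, each non-trivial strongly connected component is a single cycle $c = (a_0 \to a_1 \to \dots \to a_{\ell-1} \to a_0)$, and each $a_i$ on the cycle acts on the first level with a permutation fixing the letter $x_i$ along which the cycle edge leaves $a_i$ — indeed, if $a_i$ moved that letter, iterating would spread the support and force polynomial (hence here, since components are single cycles, actually exponential or higher) growth of $\Gamma_{a_i}$. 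States not on any cycle have $\Gamma_a$ bounded but eventually \emph{constant} equal to zero, i.e. they are ``finitary'' (some section at a deep enough level is trivial along every descending ray except finitely many, and those stabilize to permutations).

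Next, to realize $\langle A\rangle$ inside a mother group, I would enlarge the alphabet: pick $m'$ large enough that (i) $X = \{0,\dots,m-1\}$ embeds in $X' = \{0,\dots,m'-1\}$, (ii) the finite group generated by all the first-level sections and permutations appearing in the finitary parts of states of $A$ embeds in $\S_{m'-1}$ (the stabilizer of $0$), and (iii) one can route every cycle so that its distinguished fixed letter becomes the letter $0$. Concretely, I would conjugate the whole action by a tree automorphism that, level by level, relabels letters so that for each cyclic state the cycle-following letter is $0$; this is possible because the cycles are disjoint as strongly connected components, so on each level only finitely many constraints must be met, all compatible after enlarging the alphabet. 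Under this conjugation each cyclic state $a$ acquires a wreath recursion $a = (a, \tau_1,\dots,\tau_{m'-1})\rho$ with $\rho$ fixing $0$ and $\tau_i\in\S_{m'}$ — exactly the defining form of an element of the $\A$ for the mother group — while finitary states land in the subgroup $\H_{m'}$ as well. Since $\B = \S_{m'}$ in the mother group, all first-level permutations are available, and one checks the map $\langle A\rangle \to \M(\A,\S_{m'})$ thus defined is an injective homomorphism (injectivity because conjugation by a tree automorphism is an isomorphism onto its image in $\Aut(\T_{m'})$, reading $\T_m$ as a subtree).

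The main obstacle I anticipate is point (iii), the simultaneous relabeling that sends every cycle's distinguished letter to $0$: a priori two different cyclic states could be forced through the same vertex and demand incompatible relabelings of its outgoing letters. The resolution is to use the enlargement of the alphabet to ``spread out'' the rays — formally, one first replaces $\T_m$ by $\T_{m^k}$ for a suitable $k$ (grouping $k$ levels into one), which both increases the alphabet and separates the finitely many cyclic rays so that at each (super)level they occupy distinct letters, and only then relabels. Verifying that the resulting automaton is still bounded and that the generated group is unchanged up to the isomorphism $\T_m \cong \T_{m^k}$ (as abstract rooted trees, after a standard identification) is routine but must be done carefully. Once the separation is arranged, the relabeling and the verification that the images of the generators have the required wreath-recursive form are a finite check. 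I would then remark that the bound $m'$ produced is explicit in terms of $|A|$, $m$, and the cycle structure, matching the statement of \cref{embedding mother group}.
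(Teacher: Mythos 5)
First, note that the paper does not prove this statement itself; it is imported from \cite{BKN10,AAV:amenability}, so your argument has to be measured against the proofs there. Your overall skeleton (section-closed automaton, finitary versus cyclic states, passage to a power alphabet) is the right one, but two of your central claims fail. The first is the assertion that each cyclic state fixes the letter along which its cycle proceeds, ``since otherwise iterating would spread the support.'' This is false: the binary odometer $a=(e,a)\sigma$, with $\sigma$ the transposition of $\{0,1\}$, is a bounded automaton with $\Gamma_a(n)\equiv 1$, whose unique nontrivial state sits on a self-loop labelled $(1,0)$ --- the cycle letter $1$ is sent to $0$. Moving the cycle letter displaces the \emph{image} of the active vertex but does not create new nontrivial sections, so it costs nothing in activity. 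As a consequence the odometer stabilizes no ray of the tree, and therefore no conjugate of it (over any power alphabet) can have a wreath recursion $(a,\tau_1,\dots,\tau_{m'-1})\rho$ with $0\cdot\rho=0$: elements of $\H_{m'}$ stabilize the zero ray by definition. Worse, every element of $\M(\A,\S_{m'})$ alters only finitely many letters of an eventually-zero ray (each generator in $\A\cup\B$ does, and this property is closed under products), whereas the odometer sends $\bar 0$ to a ray with constant nonzero tail; so the odometer is not even an element of the mother group acting on the same tree. The embedding of \cref{embedding mother group} therefore cannot send such a generator to a single standard generator, or to any single conjugated tree automorphism: its image must be a genuine product inside $\M$, and producing that product systematically is the actual content of the theorem, which your sketch does not address.

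The second problem is the simultaneous relabelling. Conjugation by a tree automorphism $\gamma$ transports ``directed along $\xi$'' to ``directed along $\xi\cdot\gamma$,'' and a bijection of the boundary cannot send two distinct rays both to $\bar 0$. Your proposed remedy --- separating the rays by grouping levels into $\T_{m^k}$ --- makes this obstruction unavoidable rather than removing it: once the rays of two cyclic states diverge at the first (super)level, at most one of them can be mapped to $\bar 0$, and the other conjugated generator is directed along a ray $u\bar 0$ with $u$ nonempty, hence has a non-rooted section at a nonzero first-level letter and is not of the form $(a,\tau_1,\dots)\rho$. This particular defect is repairable, but only via the substantive lemmas that are really the heart of the cited proofs: that $\M(\A,\S_{m'})$ contains the full finitary group, and contains, for each vertex $u$, the ``rigid'' copies of $\A$-type elements hanging at $u$ alone --- whence directed elements along eventually-zero rays lie in $\M$ even though they are not among the generators $\A\cup\B$. (A smaller inaccuracy of the same flavour: a nontrivial state need not be either finitary or on a cycle; it may merely feed into a cycle.) As it stands, the proposal establishes neither of the two facts on which the embedding actually rests, so it does not constitute a proof.
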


Henceforth, we shall fix a sequence $\mb=(m_n)_n$ of natural numbers bounded
by $m_*=\max_n m_n$, as well as two finite groups $A<\H_{\mb}$ and
$B\subset S_{m_1}$ generating a principal group of directed automorphisms
$\M = \M(A,B)$.  We also fix a subgroup $G < \M$, generated by a finite
symmetric set $S\subset \M$.

Furthermore, it will be useful to suppose that $A$ contains the following
elements.  Let $\bar{\sigma}=(\sigma_1,\dots,\sigma_{m_*})$ with
$\sigma_i\in S_i$ be a collection of permutations in the symmetric groups
up to $m_*$ elements.  Define $h_{\bar{\sigma}}\in\Aut(\Tmb)$ to act on
words as follows.  If the first non-zero letter of word $w$ is at position
$i$, then $w\cdot h_{\bar{\sigma}}$ is equal to $w$ except for the $i+1$st
letter which is permuted by $\sigma_{m_{i+1}}$.  It is easy to see that
elements of the form $h_{\bar{\sigma}}$ are in $\H_{\mb}$ and form a finite
group.  We shall suppose that $\A$ contains this finite group.  Adding any
finite set of elements to $\A$ does not cause any loss of generality, since
the group $\H_{\mb}$ is locally finite.

\subsection{Sections in the principal groups of directed automorphisms}
\label{S:Sections_principal}

We now describe the sections of the generators $s\in S$ of the group $G$.
We will use notations analogous to those in \cref{S:sections_bounded}.

\begin{defin}\label{sections of directed group}
  \begin{itemize}
  \item Let $A_n = \langle a|_{0\cdots0}\rangle_{a\in A}$ the finite subgroup
    of $\H_{\sigma^n\mb}$ consisting of sections of elements of $A$ at
    $n$th level along the zero ray.
  \item Let $B_{n}=\langle a|_{x0\cdots0}\rangle_{a\in A,\: x\in
      X_{m_n}\setminus \{0\}}=\langle a'|_x\rangle_{a'\in A_{n-1},x\in
      X_{m_n}\setminus\{0\}} $ the subgroup of $\S_{m_n}$ generated by the
    $n$th level sections of $A$ at neighbors of the zero ray.
  \end{itemize}
\end{defin}

Note that $A_n\cup B_n$ generate the group of $n$ level sections
$\M^{(n)}$.  In particular, $\M^{(n)}=M(A_n, B_n)$ is a principal group of directed
automorphisms of $\T_{\sigma^n\mb}$.

As in \cref{S:sections_bounded} we denote by $\W_n\subset
\Tmb^n$ the set of $n$th level vertices $w\in \Tmb^n$ such that the section
$s|_w$ is non-trivial for some generator $s\in S$.  We also keep the same
definitions of the set of singular rays $\Abb_\infty$ and the sets
$\Abb_n,\Bbb_n$. The following Lemma is a more explicit version of
\cref{P:sections} in this setting.

\begin{lemma}\label{L:sections of generating set}
  The set $\Abb_\infty$ consists of rays ending with an infinite sequence
  of zeros.  In particular there is an $n_0$ and set $\{w_1,\dots,w_k\}
  \subset \Tmb$ independent of $n$, so that for $n>n_0$, the sets $\Abb_n$
  and $\Bbb_n$ have the form
  \begin{itemize}
  \item $\Abb_n=\{00\dots0w_j \}$,
  \item $\Bbb_n=\{x00\dots0w_j : x\in X_{m_n}\setminus\{0\}\}$.
  \end{itemize}
  Moreover for every generator $s$, we have $s|_w\in A_n$ (resp.\ $s|_w\in
  B_n$) if $w\in \Abb_n$ (resp.\ $w\in\Bbb_n$) and $s|_w=e$ otherwise.
\end{lemma}

\begin{proof}
  We first show that for every $h\in\M$ there exist a $n_h$ such that for
  $n\geq n_h$ all of its $n$th level sections are in the generating set
  $A_n\cup B_n$.  From the definition of $A_n$ and $B_n$, it suffices to
  prove this for $n=n_h$.  We do this by induction on the word metric $|h|$
  associated to the generating set $A\cup B$. For $h\in A\cup B$ the claim
  holds with $n_h=0$.

  First of all, observe that if $h$ is a product of two generators then its
  first level sections are in $A_1\cup B_1$, so that one can take $n_h=1$.
  Indeed, if $h=s_1s_2$, with $s_1,s_2\in\A$ then $h\in\A$ and its first
  level sections are in $A_1\cup B_1$ by definition.  If $s_1$ is in $B$
  then its sections are trivial, and from \eqref{sections multiplication
    rule} we see that first level sections of $h$ are those of $s_2$
  possibly in a different order, and are in $A_1 \cup B_1$.  Similarly,
  this is the case if $s_2\in B$.

  Generally, suppose that the conclusion holds for $h$, and consider
  $g=hs$.  Then sections of $g$ at level $n_h+1$ are first level sections
  of products from $A_{n_h}\cup\B_{n_h}$.  The case of a product of two
  generators applies, and the sections are in $\A_{n_h+1}\cup\B_{n_h+1}$,
  so $n_g=n_h+1$ will do.

  \medskip

  We deduce that for every large enough level $n$, the sections of every
  generator $s\in S$ are in $A_n\cup B_n$. To conclude observe that
  elements of $B_n$ are finitary and elements of $A_n$ are directed along
  the zero ray. It follows that the singular set $\Abb_\infty$ of generators
  consists of rays ending with an infinite sequence of zeros, and that
  $\Abb_n$ and $\Bbb_n$ have the claimed form.
\end{proof}

\subsection{Resistances in Schreier Graphs}
\label{S:Resistances}

\renewcommand{\G}{\Lambda}

In this subsection we analyze effective resistances in the Schreier graphs
of the group $G$ acting on the levels of the tree, with respect to the
fixed generating set $S$. See \cite[Chapter 2]{LyPe} for a general
background on electric network theory.

It is convenient to first consider the Schreier graph $\G_n$ for the whole
group $\M$ acting on the $n$th level $\Tmb^n$, equipped with the standard
generating set $A\cup B$.  Call the vertex $0^n\in\Tmb^n$ the \emph{root}.
Vertices of the form $x0^{n-1}\in\Tmb^n$ with $x\neq0$ are called the
\emph{anti-roots}.
The following proposition determines a lower bound for the asymptotics of
resistance in $\G_n$ between the root and any anti-root, as $n\to\infty$.
See also \cite{AV:positivespeed,AV:speedexponents} for more on resistances
in these graphs.

\begin{lemma}\label{L:resistance}
  There exists a constant $c$, not depending on $n$, such that for every
  $x\neq 0$ we have the resistance bound
  \[
  \res_{\G_n} (0^n, x0^{n-1})
  \geq c \prod_{i=1}^n \frac{m_i}{m_i-1}
  \geq c \left(\frac{m_*}{m_*-1}\right)^n.
  \]
\end{lemma}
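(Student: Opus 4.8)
The plan is to set up a recursion in $n$ that relates the resistance in $\G_n$ between the root and an anti-root to the analogous resistance in $\G_{n-1}$. The key structural fact is that $\Tmb^n$ is built from $m_n$ copies of $\T_{\sigma\mb}^{n-1}$ (one for each first letter $x\in X_{m_n}$), each copy being a Schreier graph $\G_{n-1}'$ for the section group $\M^{(1)}=\M(\A_1,\B_1)$ acting on level $n-1$ of the shifted tree, glued along the action of the generators. Concretely, generators in $\A$ act on the copy indexed by $0$ via their first-level sections (which lie in $\A_1$) and act on the copies indexed by $x\neq 0$ via the permutation-type sections in $\B_1\subset\S_{m_{n}}$ (which act only at the next level, hence near the anti-root of that copy); generators in $\B$ just permute the $m_n$ copies among themselves by a permutation of $X_{m_n}$ fixing $0$. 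So the root $r_n = 0r_{n-1}$ sits inside the $0$-copy at its root, and the anti-roots $xr_{n-1}$ sit one per nonzero copy, each at the root of that copy.

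First I would make this precise by describing $\G_n$ as: $m_n$ disjoint copies $\G_{n-1}^{(x)}$ of (a graph containing a subgraph isomorphic to) $\G_{n-1}$, plus a bounded number of extra ``horizontal'' edges coming from generators in $\B$ and from the $\B_1$-type action of $\A$-generators, all of these horizontal edges being incident to the roots and anti-roots of the copies. Then I would use the standard tools from \cite[Chapter 2]{LyPe}: the parallel law and Rayleigh monotonicity. Shorting together, in each copy $\G_{n-1}^{(x)}$, everything except that copy's root and anti-root, and contracting the (boundedly many) horizontal edges, turns $\G_n$ into a bounded-size network whose only ``long'' edges are the $m_n$ resistors of value $\res_{\G_{n-1}}(r_{n-1}, x'r_{n-2})$ for the various copies. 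In that bounded network, getting from the $0$-copy's root to a nonzero copy's root forces the current through the copies; an elementary series/parallel estimate (the nonzero copies are essentially in parallel between two contracted nodes, then one copy's resistor in series to reach the target) gives
\[
\res_{\G_n}(r_n, xr_{n-1}) \;\geq\; \frac{1}{\text{const}}\cdot\frac{m_n-1+1}{m_n-1}\cdot \res_{\G_{n-1}}(r_{n-1}, x'r_{n-2}) \;\geq\; \frac{m_n}{m_n-1}\,\res_{\G_{n-1}}(r_{n-1},x'r_{n-2}) \cdot \frac{1}{\text{const}},
\]
the gain factor $\frac{m_n}{m_n-1}$ arising exactly because the $m_n-1$ parallel nonzero copies must be supplemented by the extra series segment to the distinguished target copy; since $m_n\leq m_*$ is bounded, the multiplicative constants can be absorbed uniformly in $n$. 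Here I use that the section groups $\M^{(n)}$ are again principal groups of directed automorphisms (\cref{sections of directed group}), so the same combinatorial picture holds at every level and the recursion is genuinely self-referential with a bounded ``overhead'' graph.

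Iterating the recursion from the base level ($\res_{\G_0}$ of a one-point graph is trivial, or one starts from a small fixed level where the resistance is a positive constant) yields $\res_{\G_n}(r_n, xr_{n-1}) \geq c\prod_{i=1}^n \frac{m_i}{m_i-1}$, and since each factor is at least $\frac{m_*}{m_*-1}$ when $m_i=m_*$ — and in general $\frac{m_i}{m_i-1}\geq \frac{m_*}{m_*-1}$ is \emph{false} for small $m_i$, so more carefully one uses $\prod \frac{m_i}{m_i-1}\geq \left(\min_i \frac{m_i}{m_i-1}\right)^n = \left(\frac{m_*}{m_*-1}\right)^n$ since $t\mapsto \frac{t}{t-1}$ is decreasing — the second inequality follows. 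The main obstacle I anticipate is bookkeeping the ``overhead'' graph: one must verify that the horizontal edges added at each level are uniformly bounded in number (this is where $|\A\cup\B|$ being fixed and the explicit form of $\H_\mb$-elements, acting only along the $0$-ray and its neighbours, is essential) and that contracting/shorting them changes resistances only by a bounded multiplicative factor independent of $n$ — otherwise the accumulated constants would not stay uniform. The other delicate point is ensuring the two marked vertices (root and anti-root) of each copy are never shorted to each other by the overhead, which again follows from the directed structure: the $\B_1$-type action of $\A$-generators within a nonzero copy only touches vertices at distance $\leq 1$ from that copy's anti-root and never connects it to the copy's root for $n$ large.
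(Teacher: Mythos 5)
Your overall strategy (a level-by-level recursion via series/parallel reductions) is genuinely different from the paper's, and it has a quantitative flaw that you flag at the end but do not resolve: the loss of a multiplicative constant at each level. If each recursion step only gives $\res_{\G_n}(r_n,xr_{n-1})\geq \mathrm{const}^{-1}\cdot\frac{m_n}{m_n-1}\cdot\res_{\G_{n-1}}(\cdots)$ for some fixed $\mathrm{const}>1$, then iterating over $n$ levels produces a factor $\mathrm{const}^{-n}$, which is \emph{not} ``absorbed uniformly in $n$'': it compounds and swamps the gain, since each factor $\frac{m_i}{m_i-1}$ is at most $2$ and, when the $m_i$ are large, close to $1$. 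For the recursion to deliver $c\prod_i\frac{m_i}{m_i-1}$ the per-level overhead must cost essentially nothing (a factor $1+o(1)$ with summable error, or a single additive constant overall), and nothing in your reduction guarantees that: shorting each copy down to two terminals and contracting the horizontal edges are individually legitimate monotonicity steps, but the resulting bounded-network analysis will generically cost a genuine constant factor at every level.

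There is also a structural inconsistency in the decomposition. You index the $m_n$ copies by the \emph{top} letter $x\in X_{m_n}$ (which is indeed where $r_n=0r_{n-1}$ and the anti-roots $xr_{n-1}$ sit, one per copy), but you describe the gluing via the \emph{first-level} sections of the generators, which split the level according to the \emph{bottom} letter $x_1\in X_{m_1}$ (and the relevant permutation group there is $\B_1\subset\S_{m_2}$, not a subgroup of $\S_{m_n}$). Under the bottom-letter decomposition the root and all anti-roots lie in the \emph{same} copy (they all end in $0$), so that decomposition does not separate the two terminals at all; under the top-letter decomposition the copies are not glued in a way that cleanly yields the ``$m_n-1$ in parallel plus one in series'' circuit. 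The paper sidesteps all of this with a single global step: project $\G_n$ onto $\{0,*\}^n$ by collapsing every nonzero letter to $*$ (identifying vertices only decreases resistances), observe that the quotient multigraph is a \emph{path} from $0\cdots0$ to $*0\cdots0$ (an $\A$-generator can only alter the letter just after the first $*$ from the right, a $\B$-generator only the last letter, so every vertex has degree $2$ except the two ends), bound the multiplicity of each edge by $C\prod_{i:\,x_i=*}(m_i-1)$, and apply the series law to get $\sum_{x\in\{0,*\}^n}C^{-1}\prod_{i:\,x_i=*}\frac{1}{m_i-1}=C^{-1}\prod_{i=1}^n\frac{m_i}{m_i-1}$ with one constant for the whole argument. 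I would recommend adopting that (or some other non-iterated) lower bound rather than trying to repair the recursion.
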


\begin{proof}
  A word in $\Tmb^n$ can be mapped to a word in $\{0,*\}^n$, by
  substituting every non-zero letter with the symbol $*$.  The set of
  anti-roots is exactly the pre-image of $*0^{n-1}$.  The graph $\G_n$
  projects to a graph $\hat{\G}_n$ with vertex set $\{0,*\}^n$ and multiple
  edges.  By Rayleigh monotonicity, resistances in the projected graph are
  no larger than resistances in the original graph.

  The key observation is that $\hat{\G}_n$ is just a path with multiple
  edges, and some self loops.  The root $0^n$ and anti-root $*0^{n-1}$ are
  the ends of the path.  To see this, observe by looking to the action of
  the generators that there are only two kind of non trivial moves on
  elements of $\{0,*\}^n$: changing the rightmost (first) letter (generators in $B$) or changing the letter after the first appearance of
  $*$ from the right (generators in $A$).  Moves of the second kind give
  loops for the root $0^n$ and the anti-root $*0^{n-1}$, so these vertices
  have only one other neighbour. A connected multi-graph having all vertices
  of degree two, except two vertices of degree 1, is a path with multiple
  edges and loops.

  To get a bound on the resistances in $\G_n$ we need to find the edge
  multiplicities in $\hat{\G}_n$.  The degree of vertices in $\G_n$ is
  bounded by some $C$, so the degree of a vertex $x\in\{0,*\}^n$ is at most
  $C$ times the number of vertices in $\Tmb^n$ that project to $x$, i.e.\
  it is bounded above by $C \prod_{i| x_i=*}(m_i-1)$.  Hence the total
  resistance is bounded below by
  \[
  \res_{\hat{\G}_n}(0^n,*0^{n-1})
  \geq \sum_{\{0,*\}^n} C^{-1} \prod_{i | x_i=*} \frac{1}{m_i-1}
  = C^{-1} \prod_{i=1}^n \frac{m_i}{m_i-1}   \qedhere
  \]
\end{proof}

The Rayleigh monotonicity principle and rough invariance of resistances
under quasi-isometries \cite[Chapter 2]{LyPe} allow us to deduce a similar
consequence for the group $G$ equipped with any symmetric generating set
$S$.  Fix a level $\Tmb^n$ deep enough, and recall the definition of the
vertex sets $\Abb_n$ and $\Bbb_n$ from \cref{L:sections of generating set}.

Recall that the resistance between two vertex sets $\Abb,\Bbb$ in a graph is defined as the
resistance from $\bar{a}$ to $\bar{b}$ in the graph where $\Abb$ and $\Bbb$ have
been collapsed to points $\bar{a},\bar{b}$ (this definition makes sense
also for disconnected graphs).

\begin{lemma}\label{L:resistance orbit}
  Consider a fixed generating set $S$ of $G$, and let $\Gamma_n$ be the
  (possibly disconnected) corresponding Schreier graph of the action of $G$
  on $\Tmb^n$.  There exists a constant $c$ depending only on the
  generating set $S$ such that for any large enough $n$
  \[
  \res_{\Gamma_n}(\Abb_n,\Bbb_n) \geq c \prod_{i=1}^n \frac{m_i}{m_i-1}\geq c \left(\frac{m_*}{m_*-1} \right)^n.
  \]
  Moreover, the same holds if each $s\in S$ has some conductance which applies
  to the corresponding edges in $\Gamma_n$.
\end{lemma}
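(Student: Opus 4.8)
The plan is to transfer the lower bound of \cref{L:resistance}, which concerns the Schreier graph $\G_n$ of the full mother group $\M$ with its standard generating set $\A\cup\B$, to the Schreier graph $\Gamma_n$ of the subgroup $G$ with the chosen generating set $S$. The main tools will be the Rayleigh monotonicity principle and the coarse invariance of effective resistance under quasi-isometries (both found in \cite[Chapter~2]{LyPe}), together with the explicit description of sections of $S$ from \cref{L:sections of generating set}. First I would fix $n$ large enough (larger than the $n_0$ of \cref{L:sections of generating set}) so that $\Abb_n=\{0\cdots0w_j\}$ and $\Bbb_n=\{x0\cdots0w_j:x\in X_{m_n}\setminus\{0\}\}$. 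Given $v=0\cdots0w_j\in\Abb_n$ and $w=x0\cdots0w_j\in\Bbb_n$ in the same $G$-orbit, note that $v$ and $w$ agree below level $n-|w_j|$, so after truncating the common suffix $w_j$ the question reduces to estimating the resistance between the root $r_{n'}$ and the anti-root $xr_{n'-1}$ in a Schreier graph at level $n'=n-|w_j|$, up to a bounded correction (the suffix adds only finitely many extra vertices and edges, which changes resistances by a bounded factor).

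The heart of the argument is comparing $\Gamma_n$ to $\G_n$. The subgroup $G=\langle S\rangle$ acts on $\Tmb^n$ with orbits contained in $\M$-orbits, and $\Tmb^n$ carries the single $\M$-orbit structure described in \cref{resistances} (the full level, or its relevant portion). I would argue that on the relevant piece, the Schreier graph $\Gamma_n$ of $(G,S)$ is quasi-isometric to the Schreier graph $\G_n$ of $(\M,\A\cup\B)$, with quasi-isometry constants independent of $n$: indeed each $s\in S$ is an element of $\M$, so it is a bounded-length word in $\A\cup\B$, and conversely $G$ being a subgroup means its Schreier graph on a $G$-orbit embeds into the $\M$-Schreier graph; one must check that the $G$-orbit of $v$ inside the $\M$-orbit is ``coarsely dense'' with uniform constants, which is where the structural \cref{L:sections of generating set} and the hypothesis that $v,w$ lie in the same $G$-orbit are used. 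Passing through the projection to $\bar\G_n$ (the path-with-loops of \cref{L:resistance}) is an alternative route: Rayleigh monotonicity gives $\res_{\Gamma_n}(v,w)\geq \res_{\bar\Gamma_n}(\bar v,\bar w)$ for the corresponding projection, and one checks the projected graph of $\Gamma_n$ is again essentially a path whose edge multiplicities are bounded above by those of $\bar\G_n$ times a constant depending on $|S|$ only, yielding the same product lower bound $c\prod_{i}\frac{m_i}{m_i-1}\geq c(\frac{m_*}{m_*-1})^n$.

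For the last sentence of the statement, assigning a fixed positive conductance to the edges coming from each $s\in S$ only multiplies all conductances by a bounded factor (bounded in terms of $S$ and the chosen conductances, not in terms of $n$), so resistances change by at most a constant factor and the bound persists with an adjusted constant $c$. The main obstacle I anticipate is the quasi-isometry step: one must ensure that the comparison constants between $\Gamma_n$ and $\G_n$ (equivalently, the bound on edge multiplicities in the projected graph) do not degrade with $n$. This is exactly what \cref{L:sections of generating set} buys us --- for $n$ beyond a threshold the sections of $S$ stabilize to a fixed finite list of patterns along the zero ray, so the local structure of $\Gamma_n$ near the zero ray is eventually $n$-independent, and the degree bound $C$ and word-length bound for $S$ in terms of $\A\cup\B$ are uniform. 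Once that uniformity is in hand, the resistance estimate follows directly from \cref{L:resistance} by Rayleigh monotonicity and quasi-isometry invariance.
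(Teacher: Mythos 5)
Your overall strategy --- transfer \cref{L:resistance} to $\Gamma_n$ via Rayleigh monotonicity and rough equivalence of resistances --- is the right one, but the specific comparison you propose does not work. You want a quasi-isometry between $\Gamma_n$ (the Schreier graph of the subgroup $G=\langle S\rangle$ on a $G$-orbit) and $\G_n$ (the Schreier graph of all of $\M$ with generating set $\A\cup\B$), and you correctly flag that this would require the $G$-orbit to be coarsely dense in the $\M$-orbit with uniform constants. That is the gap: there is no reason for such density to hold, and nothing in \cref{L:sections of generating set} provides it --- the statement of the lemma even allows $\Gamma_n$ to be disconnected, i.e.\ the $G$-orbits may be small and sparse inside $\Tmb^n$. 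The fix is to compare through a third graph: let $\tilde\Gamma_n$ be the Schreier graph of the \emph{full} group $\M$ on $\Tmb^n$ with the enlarged generating set $S\cup\A\cup\B$. Then $\Gamma_n$ and $\G_n$ are both subgraphs of $\tilde\Gamma_n$, so Rayleigh monotonicity gives $\res_{\Gamma_n}(v,w)\ge\res_{\tilde\Gamma_n}(v,w)$ (this one-sided inequality is all that is needed), and $\tilde\Gamma_n$ \emph{is} roughly equivalent to $\G_n$ with constants independent of $n$, simply because the two graphs share the vertex set $\Tmb^n$ and each $s\in S$ is a word of bounded length in $\A\cup\B$. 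No density of $G$-orbits enters anywhere.

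Two further steps need repair. First, your ``truncation of the common suffix'' is not a bounded perturbation: the level-$n$ Schreier graph has $m_1\cdots m_n$ vertices and is not obtained from the level-$(n-|w_j|)$ graph by adding finitely many vertices and edges (note also that $v=0\cdots0w_i$ and $w=x0\cdots0w_j$ may carry different suffixes $w_i\neq w_j$). What one actually shows is that $v$ lies within a bounded distance $K$ of the root $r_n$ in $\tilde\Gamma_n$, and likewise $w$ of the anti-root $xr_{n-1}$, uniformly in $n$; this uses an explicit element of bounded length built from $\B$ and the elements $h_{\bar\sigma}$ (which were added to $\A$ for precisely this purpose) carrying $r_n$ to $v$ with trivial section there. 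Since effective resistance is bounded by graph distance, the triangle inequality for resistances then gives $\res_{\tilde\Gamma_n}(v,w)\ge\res_{\tilde\Gamma_n}(r_n,xr_{n-1})-2K$, and the subtracted constant is absorbed into $c$. Second, in your alternative route via the projection to $\{0,*\}^n$, the image of an $S$-edge is a chord of bounded length along the path of \cref{L:resistance}, not a single path edge, and $v,w$ project to points near, but not at, the path's endpoints; both issues are fixable but require the same uniform-distance input as above, so this route is not actually shorter. Your closing remark on conductances is fine.
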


\begin{proof}
  Since $\Abb_n, \Bbb_n$ have uniformly bounded cardinalities, it is
  sufficient to prove that for any $v\in \Abb_n$ and $w\in \Bbb_n$, the effective resistance between $v$ and $w$ in
  $\Gamma_n$ satisfies the same bound (with a possibly larger constant).
  Consider the larger generating set $\tilde{S} = S\cup A\cup B$ of $\M$,
  and let $\tilde\Gamma_n$ be the corresponding Schreier graph of the
  action of $\M$ on $\Tmb^n$.  Then $\Gamma_n$ and $\G_n$ are both subgraph
  of $\tilde\Gamma_n$, and by Rayleigh monotonicity
  \[
  \res_{\Gamma_n}(v,w) \geq \res_{\tilde\Gamma_n}(v,w).
  \]
  Next, note that the graph $\tilde{\Gamma}_n$ is roughly equivalent
  \cite[p.51]{LyPe} to the standard Schreier graph $\G_n$ with constants
  not depending on $n$.  Thus effective resistances of the graphs
  $\tilde\Gamma_n$ and $\G_n$ are equivalent up to multiplicative
  constants.  With \cref{L:resistance}, this implies that for every $x\neq
  0$
  \[
  \res_{\tilde\Gamma_n} (0^n,x0^{n-1}) \geq c'\prod_{i=1}^n \frac{m_i}{m_i-1}
  \]
  for some constant $c'$.

  Finally, we shall show that there is some constant $K$, so that for
  $v\in\Abb_n$ and $w\in\Bbb_n$, the distance in $\tilde\Gamma_n$ from
  $0^n$ to $v$ (and similarly from $x0^{n-1}$ to $w$) are at most $K$.
  Since resistance is bounded by distance and by the triangle inequality
  for resistances we get
  \[
  \res_{\Gamma_n}(v,w) \geq c'\prod_{i=1}^n \frac{m_i}{m_i-1} - 2K.
  \]
  Taking $n$ large enough, this completes the proof.  Since $\G_n$ is a
  subgraph of $\tilde\Gamma_n$ it suffices to bound distances in $\G_n$.

  By \cref{L:sections of generating set} we have $v=0\dots0w_i$ and
  $w=x0\dots0w_j$ where $w_i$ and $w_j$ are words in some fixed and finite
  set. Not that there is an element $g$
  of length at most $2^l-1$  such that $0^l\cdot g =
  w_i$, and $g|_{0^l}=e$, this is easy to see by induction on $l$ using only the generators of $B$ and
  $h_{\bar\sigma}$ (which we assumed to be in $A$).  It follows that the distance from $0^n$ to $v$
  is at most $2^l-1$ for any $n$, and similarly for $w$ and $x0^{n-1}$.

  If the elements of $S$ have associated conductances, then these are
  bounded by some constant.  By monotonicity, the resistance is decreased
  by at most that constant.
\end{proof}

\subsection{Entropy estimate}\label{S:entropy estimate}

We keep notations from the previous section: $\mu$ is supported on the set
$S$, and we denote by $\Gamma_n$ the Schreier graph of the action of
$G=\langle S\rangle$ on the level set $\Tmb^n$ with generating set $S$.  If
the action is not transitive $\Gamma_n$ is not connected, but this is
irrelevant in what follows.  Let $V_n=m_1\cdots m_n$ be the volume of the
level sets of the tree.  With $\Abb_n$ and $\Bbb_n$ as above
(\cref{L:sections of generating set}), we shall consider two resistances:
\begin{align*}
  R_n &= \res_{\Gamma_n}(\Abb_n,\Bbb_n),  &
  R^\mu_n &= \res_{(\Gamma_n,\mu)}(\Abb_n,\Bbb_n).
\end{align*}
Here $R_n$ is computed in the graph with all edge weights equal to $1$, and
$R^\mu_n$ is the resistance with edge weights given by $\mu$.  Since
$\mu(g)\le 1$ for any $g$ we have $R_n^\mu \geq R_n$.

We will use the following slight generalization of the classical formula
for random walk hitting probabilities (in the case when $A=\{a\}$ is a
singleton).  We have not located a reference for this, but this is a
reformulation of Exercise 2.45 in \cite{LyPe}.

\begin{lemma}\label{L:commute}
  Let $\Gamma$ be a finite weighted graph (possibly disconnected) where each edge $e$ has weight $w_e$.  Set $Q
  = \sum_e w_e$, the total weight of the graph.  Consider the random walk
  $(X_n)_{n \geq 0}$ started at its stationary measure $\nu(x)=Q^{-1} \sum_{e\ni
    x} w_e$.  For a vertex set $W\subset \Gamma$ denote the hitting time by
  $T_W = \min\{n\geq 1 : X_n\in W\}$.  Then for disjoint vertex sets $A,B$
  \[
  \P(X_0\in A, T_B<T_A) = \frac{1}{2Q\res(A,B)}.
  \]
\end{lemma}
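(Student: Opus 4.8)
The plan is to reduce to the classical commute-time/hitting-probability identity by the standard trick of collapsing the sets $A$ and $B$ to single vertices. First I would form the graph $\Gamma'$ obtained from $\Gamma$ by identifying all vertices of $A$ to a single vertex $\bar a$ and all vertices of $B$ to a single vertex $\bar b$, keeping all edges (including those that become loops, which do not affect the random walk in the reversible sense we need, or can simply be discarded). The total weight $Q' $ of $\Gamma'$ equals $Q$ since no edges are deleted, and by definition $\res_{\Gamma'}(\bar a,\bar b)=\res_{\Gamma}(A,B)$. The key observation is that the stationary random walk on $\Gamma$ observed only through the partition ``which of $\{\bar a\},\{\bar b\},$ rest'' a vertex lies in is a lumping that is compatible with hitting times of $A$ and $B$: the event $\{X_0\in A,\ T_B<T_A\}$ in $\Gamma$ corresponds exactly to the event $\{X_0'=\bar a,\ T'_{\bar b}<T'_{\bar a}\}$ in $\Gamma'$, because leaving $A$ and later returning to $A$ before hitting $B$ is precisely a loop at $\bar a$ avoiding $\bar b$, while the stationary measure of the lumped chain assigns $\bar a$ mass $Q^{-1}\sum_{e\ni A} w_e$, matching $\P(X_0\in A)$.

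With this reduction in place, the statement becomes the classical identity for a two-point target. I would use that for the stationary walk on a finite weighted graph with total weight $Q$, the stationary probability of starting at $\bar a$ times the escape probability $\P_{\bar a}(T_{\bar b}<T_{\bar a}^+)$ equals $1/(2Q\,\res(\bar a,\bar b))$. This is exactly the content of Exercise 2.45 in \cite{LyPe}: the quantity $\pi(\bar a)\,\P_{\bar a}(T_{\bar b}<T_{\bar a}^+)$ is the (symmetric in $\bar a,\bar b$) effective conductance divided by $2Q$, since $\pi(\bar a)=c(\bar a)/(2Q)$ where $c(\bar a)=\sum_{e\ni \bar a}w_e$, and $c(\bar a)\,\P_{\bar a}(T_{\bar b}<T_{\bar a}^+)=\mathcal C_{\mathrm{eff}}(\bar a,\bar b)=1/\res(\bar a,\bar b)$ by the standard electrical interpretation of escape probabilities (the current flowing out of $\bar a$ when a unit voltage is imposed between $\bar a$ and $\bar b$). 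Tracing through: $\P(X_0'=\bar a,\ T'_{\bar b}<T'_{\bar a})=\pi'(\bar a)\P_{\bar a}(T'_{\bar b}<T'^+_{\bar a})=\frac{c(\bar a)}{2Q}\cdot\frac{1}{c(\bar a)\res(\bar a,\bar b)}=\frac{1}{2Q\,\res(A,B)}$.

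The only delicate point, and the one I would spell out carefully, is the justification of the lumping step, in particular handling disconnected $\Gamma$ and making sure ``$T_A$'' with $A$ a set behaves under collapse exactly like ``$T_{\bar a}$'' with $\bar a$ a point — here one must note that the first return time $T_{\bar a}^+$ to the collapsed vertex corresponds to the first time the original walk, having started in $A$ and possibly wandered inside $A$ then left, comes back to $A$, which is consistent with the event we want because excursions staying entirely within $A$ contribute a return to $\bar a$ but our event $T_B<T_A$ already forbids any return to $A$ at all. If $\Gamma$ is disconnected, only the component(s) meeting both $A$ and $B$ contribute to $\P(T_B<T_A\mid X_0\in A)$; components meeting $A$ but not $B$ contribute $0$ to the numerator and $\infty$ resistance, so the formula still reads correctly provided we interpret $\res(A,B)$ via the collapsed graph as stated in the lemma. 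I do not expect a serious obstacle beyond writing these bookkeeping remarks cleanly; the mathematical core is entirely the cited exercise applied to $\Gamma'$.
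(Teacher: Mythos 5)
Your argument is correct, and it is exactly the route the paper intends: the paper itself gives no written proof of \cref{L:commute}, only the definition of $\res(A,B)$ via collapsing $A$ and $B$ to points and a pointer to Exercise 2.45 of \cite{LyPe}, which is precisely the reduction plus classical escape-probability identity $\pi(\bar a)\,\P_{\bar a}(T_{\bar b}<T_{\bar a}^+)=1/(2Q\res(\bar a,\bar b))$ that you carry out. One small caveat: your parenthetical ``or can simply be discarded'' about the edges internal to $A$ and $B$ should be dropped --- deleting them leaves $\res(A,B)$ unchanged but decreases the total weight $Q$, which appears explicitly in the formula, so those edges must be retained as loops (as your main line of argument indeed does, since loops contribute zero current and cancel out of $\pi(\bar a)\cdot\mathcal{C}_{\mathrm{eff}}/c(\bar a)$).
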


The following proposition is a quantitative version of our previous proof.

\begin{prop}\label{L:explicit estimate}
  With the above notations, there exists a constant $C$ depending only on
  $\mb$, on the ambient group $\M(\A,\B)$ and on $\supp(\mu)$ such that for
  every $n$ and $k$ we have
  \[
  H(\mu^{*k}) \leq C \left(V_n + \frac{k}{R_n} \right).
  \]
\end{prop}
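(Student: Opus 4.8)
The plan is to run the same argument as in the proof of \cref{P:a-priori bound}, but instead of first fixing $n$ and then sending $k\to\infty$, to keep both $n$ and $k$ as free parameters and track all constants explicitly. Fix $n$. Applying \cref{P:entropy}(2) exactly as in the proof of \cref{T:ascension inequality}, the random variable $\al{g}_k$ (the $k$-step random walk driven by $\mu$) is determined by the data $(\al{g}_k|_v, v\cdot\al{g}_k)_{v\in\Tmb^n}$, so
\[
H(\mu^{*k}) = H(\al{g}_k) \leq \sum_{v\in\Tmb^n} H\bigl(\al{g}_k|_v, v\cdot\al{g}_k\bigr).
\]
Each summand with $v$ in an orbit not meeting $\W_n$ contributes only the entropy of a position in a finite Markov chain, hence at most $\log V_n$; summing these gives a term bounded by $C V_n$ for a constant $C$ depending only on $\mb$ (in fact $|\Tmb^n|\log|\Tmb^n|$, which we absorb into $C V_n$ after noting $\log V_n \le C' n \le C'' V_n$; alternatively one keeps the cruder $V_n\log V_n$ bound and absorbs the log into $C$ since valencies are bounded). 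So the whole issue is to bound the contribution of the (uniformly bounded in number) orbits $\O_1,\dots,\O_{r(n)}$ meeting $\W_n$.

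For such an orbit $\O_i$, write $\W_i = \W_n\cap\O_i = \Abb_i\sqcup\Bbb_i$. The point $v\cdot\al{g}_k$ contributes at most $\log V_n$ to the entropy, and there are boundedly many such $v$, so the position contributes $O(V_n)$ in total. For the section part $\al{g}_k|_v$: as in the proof of \cref{P:a-priori bound}, passing to the trace $T_{\W_i}(\mu)$ over $\W_i$ and using the structure from \cref{conditions 123}, the section evolves inside the locally finite group $\M^{(n)}$ and, with respect to the word metric on $\M^{(n)}$ given by $\A_n\cup\B_n$, one has $|\al{g}_k|_v| \le 2\ell_k + 1$ where $\ell_k$ is the number of $\Abb_i$-to-$\Bbb_i$ edge crossings of the trace walk up to time $k$. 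By \cref{L:edge probability bound} — or rather its proof, which gives the bound in terms of $\res_{(\Gamma_n,\mu)}(v,w)^{-1} \le \res_{(\Gamma_n,\mu)}(\Abb_n,\Bbb_n)^{-1}$ — together with \cref{L:commute} applied to the weighted Schreier graph to estimate the stationary crossing rate, the expected number of such crossings up to time $k$ is at most $C' k / R^\mu_n \le C' k/R_n$. Hence by \cref{P:entropy}(3), $H(\al{g}_k|_v) \le C\,\E|\al{g}_k|_v| \le C''(1 + k/R_n)$. Summing over the boundedly many relevant $v$ and combining with the $O(V_n)$ position term and the $O(V_n)$ contribution from the other orbits yields $H(\mu^{*k}) \le C(V_n + k/R_n)$, with $C$ depending only on $\mb$, on $\M(\A,\B)$, and on $\supp(\mu)$.

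The main obstacle is making the crossing-rate estimate uniform in both $n$ and $k$. In \cref{L:edge probability bound} we only need an inequality between the trace transition probability $\bar p_{vw}$ and a resistance; but here we need to control the \emph{number} of crossings in $k$ steps, not just a single transition probability. This is where \cref{L:commute} enters: it identifies $\P(X_0\in\Abb_i,\ T_{\Bbb_i}<T_{\Abb_i})$ for the stationary walk on $(\Gamma_n,\mu)$ with $(2Q\,\res)^{-1}$, where $Q$ is the total edge weight (which is $O(V_n)$ since $\mu$ is a probability measure and each vertex carries total conductance $1$). Multiplying by the total number of time steps and by $Q$ (to pass from stationary-fraction to absolute count) gives expected crossings $\le C k/\res_{(\Gamma_n,\mu)}(\Abb_n,\Bbb_n) \le Ck/R_n$, the $V_n$ factors cancelling. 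One must also check that crossings of edges \emph{internal} to $\Abb_i$ (resp. $\Bbb_i$) do not inflate the section beyond the claimed bound — but those only multiply by elements of $\A_n$ (resp. $\B_n$), contributing to the $2\ell_k+1$ bound through the ``$+1$'' structure already accounted for, exactly as in \cref{P:a-priori bound}; this is the one place where the directed-automorphism structure (\cref{conditions 123}) is essential and must be invoked carefully rather than waved through.
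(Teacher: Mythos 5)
Your overall strategy---split $\al{g}_k$ into its level-$n$ data and its sections, bound the section lengths by the number of $\Abb$--$\Bbb$ traverses, and control the traverse rate by \cref{L:commute} together with the resistance bound of \cref{L:resistance orbit}---is the same as the paper's, and you have identified all the key inputs. However, two bookkeeping steps are wrong as written, and each one breaks the stated bound. First, the positional term: starting from $H(\al{g}_k)\le\sum_{v\in\Tmb^n}H(\al{g}_k|_v,\,v\cdot\al{g}_k)$ forces you to pay $H(v\cdot\al{g}_k)\le\log V_n$ separately for each of the $V_n$ vertices, i.e.\ a total of $V_n\log V_n$. Since $\log V_n\asymp n\to\infty$, this cannot be absorbed into $CV_n$ with $C$ independent of $n$ (your inequality $\log V_n\le C''V_n$ gives $V_n\log V_n\le C''V_n^2$, not $C''V_n$). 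The paper avoids this by not splitting the positions vertex by vertex: $\al{g}_k$ is determined by the single automorphism $\sigma_k$ of the finite tree up to level $n$ together with the level-$n$ sections, and $H(\sigma_k)\le CV_n$ because the portrait description shows there are at most $(m_*!)^{2V_n}$ such automorphisms.

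Second, the section term. It is not true that only boundedly many $v$ are ``relevant'': \cref{L:sections of generating set} says the \emph{generators} have nontrivial sections only on $\W_n$, but $\al{g}_k|_v=s_1|_v\,s_2|_{v\cdot s_1}\cdots$ becomes nontrivial as soon as the trajectory $v\cdot\al{g}_j$ enters $\W_n$, so once $k$ exceeds the diameter of the Schreier graph essentially every $v$ in an orbit meeting $\W_n$ carries a nontrivial section---up to $V_n$ of them. Moreover \cref{L:commute} only controls the \emph{stationary} walk, so it does not give $\E\ell_v\le Ck/R_n$ for a fixed starting vertex $v$; what it gives (and what your ``multiplying by $Q$, the $V_n$ factors cancelling'' actually computes) is the aggregate bound $\sum_{v}\E\ell_v\le Ck/R_n$ over all starting vertices, the per-vertex average being only of order $k/(V_nR_n)$. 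The correct assembly is to sum $H(\al{g}_k|_v)\le C(\E\ell_v+1)$ over all $V_n$ vertices using this aggregate bound (the paper does this by inserting an independent uniform automorphism $\epsilon$, so that $\epsilon(v)\cdot\al{g}_j$ is genuinely stationary and the sum equals $V_n H(\al{g}_k|_{\al{v}})$ for a uniform $\al{v}$), which yields $Ck/R_n+CV_n$; applying your per-vertex bound $C''(1+k/R_n)$ to all $V_n$ relevant vertices would instead give the useless $CV_nk/R_n$. Both repairs are exactly what the paper's proof does, so the proposal is close in spirit, but as written it does not establish the proposition.
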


Before proving \cref{L:explicit estimate}, let us see how this implies
\cref{T:spherically homogeneous} by an appropriate choice of the level $n$.
Let $n=n(k)$ be the smallest integer such that $k\leq V_n
R_n$. \cref{L:explicit estimate} applied for this choice of $n$ gives
\[
H(\mu^{*k})\leq 2 C V_n= 2C\prod_{i=1}^n m_i.
\]
Recall that $\alpha=\log m_*/ \log\frac{m_*^2}{m_*-1}$ is as in the theorem
and note that for $m\leq m_*$ we have $m\leq \left( \frac{m^2}{m-1}
\right)^\alpha$.  Recall also from \cref{L:resistance orbit} that $R_n \geq
c\prod_{i=1}^n \left(\frac{m_i}{m_i-1}\right)$.  Using these inequalities, we get
\begin{align*}
  H(\mu^{*k})
  &\leq 2C m_* \prod_{i=1}^{n-1} m_i
  \leq 2C m_* \left(\prod_{i=1}^{n-1}\frac{m_i^2}{m_i-1}\right)^\alpha \\
  &= 2C m_* \left(V_{n-1}\prod_{i=1}^{n-1}\frac{m_i}{m_i-1}\right)^\alpha
  \leq 2C m_* (V_{n-1}R_{n-1})^\alpha \leq 2C m_* k^\alpha,
\end{align*}
where we used that $V_{n-1}R_{n-1}\leq k$ by the choice of $n$.

\begin{proof}[Proof of \cref{L:explicit estimate}]
  Fix $k$ and $n$ big enough, and let $\al{g}_k$ be the $k$th step of a
  random walk on $G$ with law $\mu^{*k}$. We have that $\al{g}_k$ is
  determined by its action $\sigma_k$ on $\Tmb^n$ together with its
  sections at the vertices there, and we shall estimate the entropy coming
  from each of these parts.

  Consider the finite tree $\tilde{\T}_\mb^n$ consisting of all vertices up
  to and including level $n$, and let $\tilde V_n \leq 2V_n$ be its
  cardinality. An automorphism of $\tilde{\T}_\mb^n$ is determined by the
  permutation associated to each of its vertices, that give the action on
  the children (this is called the {\em portrait} of the automorphism).
  Hence the set of automorphisms of the finite tree has cardinality at most
  $(m_*!)^{\tilde{V}_n} \leq C^{V_n}$, where $C$ depends only on $m$. It
  follows that $\sigma_k$ has at most $C^{V_n}$ possible values, and by
  \cref{P:entropy}(1) we have
  \[
  H(\sigma_k) \leq C V_n
  \]
  for some $C$ depending only on $m$.

  To make the next estimates cleaner it is convenient to add randomness in
  the form of an independent uniform automorphism $\epsilon$ of
  $\tilde{\T}_\mb^n$.  As for $\sigma_k$, we have $H(\epsilon) \leq C V_n$. We then have
  \begin{align*}
    H(\al{g}_k) \leq H(\al{g}_k,\epsilon)
    = H(\sigma_k, \epsilon, (\al{g}_k|_v)_{v\in\Tmb^n}) &= H(\sigma_k, \epsilon, (\al{g}_k|_{\epsilon (v)})_{v\in\Tmb^n}) \\
    &\leq H(\sigma_k) + H(\epsilon) + \sum_{v\in\Tmb^n}
    H(\al{g}_k|_{\epsilon(v)}).
  \end{align*}
  The first two terms here are at most $CV_n$.  The advantage of using
  $\epsilon$ independent of $\al{g}_k$ is that $\epsilon(v)$ is uniform in
  $\Tmb^n$, and in particular all terms in the last sum are now equal and
  the sum equals $V_nH(\al{g}_k|_\al{v})$, where $\al{v}$ is a uniform
  random vertex of $\Tmb^n$.  (Alternatively, this could be achieved with
  an $\epsilon$ with smaller entropy $\log V_n$, by taking a random power
  of some fixed cyclic permutation of $\Tmb^n$.)

  To estimate $H(\bg_k|_\bv)$ we note that $(\bg_i|_\bv,\bv\cdot\bg_i)$ is
  a random walk with internal degrees of freedom with state space $\Tmb^n$,
  and that the edge measures are supported on the finite groups $\A_n$ and
  $\B_n$ when $\bv\cdot\bg_i$ is at $\Abb_n$ and $\Bbb_n$ respectively, and
  on the identity otherwise (\cref{C:mu_properties}). It follows from
  \cref{P:entropy}(3) that $H(\al{g}_k|_\al{v}) \leq C \E
  |\al{g}_k|_\al{v}|+C$, where the length $|\al{g}_k|_\al{v}|$ is measured
  w.r.t.\ the generating set $\A_n \cup \B_n$, and the constant $C$ depends
  only on the cardinalities of these groups (hence on $\M(\A,\B)$ only).

  Let $\ell$ be the number of times the walk $(\bv\cdot\bg_i)$ moves from
  $\Abb_n$ to $\Bbb_n$ and back to $\Abb_n$ up to time $k$. Then we have
  $|\al{g}_k|_\al{v}| \leq 2\ell+2$, and so we need to estimate $\E\ell$.
  Say that a traverse begins at time $i$ if $\bv\cdot\bg_i\in\Abb_n$ and if
  the walk then visits $\Bbb_n$ before returning to $\Abb_n$. Note that we
  do not care whether the visit to $\Bbb_n$ or the return to $\Abb_n$ occur
  before or after time $k$. We now use \cref{L:commute}, which applies
  since $\bv\cdot\bg_i$ is stationary. The total weight of edges leaving
  each vertex is $1$, and so (recalling that $R^\mu_n\geq R_n$)
  \[
  \P(\text{a traverse begins at time } i) = \frac{1}{2V_n R_n^\mu}\leq
  \frac{1}{2V_n R_n}.
  \]
  Thus $\E\ell \leq \frac{k}{2V_n R_n}$, and so
  \[
  H(\al{g}_k|_\al{v}) \leq C\E(2\ell+2) \leq \frac{Ck}{V_n R_n} + 2C,
  \]
  and
  \[
  H(\bg_k) \leq C'\left(V_n + \frac{k}{R_n}\right).  \qedhere
  \]
\end{proof}

\let\oldthebibliography=\thebibliography
\let\endoldthebibliography=\endthebibliography
\renewenvironment{thebibliography}[1]{
  \begin{oldthebibliography}{#1}
    \setlength{\itemsep}{.5mm}
  }{
  \end{oldthebibliography}
}

\bibliography{rootedtrees}
\bibliographystyle{alpha}

\end{document}